\newenvironment{customthm}[1]
  {\innercustomthm}
  {\endinnercustomthm}
\newtheorem{theorem}{Theorem}[subsection]
\newtheorem{corollary}[theorem]{Corollary}
\newtheorem{proposition}[theorem]{Proposition}
\newtheorem*{theorem*}{Theorem}
\theoremstyle{definition}
\newtheorem{definition}[theorem]{Definition}
\theoremstyle{definition}
\newtheorem{remark}[theorem]{Remark}
\definecolor{winered}{rgb}{0.5,0,0}
\newcommand{\F}{\mathbb{F}}
\newcommand{\G}{\mathbb{G}}
\newcommand{\sfC}{\mathsf{C}}
\newcommand{\s}{\mathsf{Set}}
\newcommand{\sS}{\mathsf{sSet}}
\newcommand{\Coalg}{\mathsf{Coalg}}
\newcommand{\CoCo}{\mathsf{CoCoalg}}
\newcommand{\sC}{\mathsf{sCoalg}}
\newcommand{\sCC}{\mathsf{sCoCoalg}}
\newcommand{\dgC}{\mathsf{dgCoalg}}
\newcommand{\dgA}{\mathsf{dgAlg}}
\newcommand{\Og}{\mathcal{O}_G}
\newcommand{\colim@}[2]{%
  \vtop{\m@th\ialign{##\cr
    \hfil$#1\operator@font colim$\hfil\cr
    \noalign{\nointerlineskip\kern1.5\ex@}#2\cr
    \noalign{\nointerlineskip\kern-\ex@}\cr}}%
}
\newcommand{\colim}{%
  \mathop{\mathpalette\colim@{\rightarrowfill@\scriptscriptstyle}}\nmlimits@
}
\renewcommand{\varprojlim}{%
  \mathop{\mathpalette\varlim@{\leftarrowfill@\scriptscriptstyle}}\nmlimits@
}
\renewcommand{\varinjlim}{%
  \mathop{\mathpalette\varlim@{\rightarrowfill@\scriptscriptstyle}}\nmlimits@
}
\theoremstyle{definition}
\title{Equivariant homotopy theory via simplicial coalgebras}
\author{Sofía Martínez Alberga}
\address{
Sofía Martínez Alberga\\
Purdue University\\Department of Mathematics\\ 150 N. University St. West Lafayette, IN 47907, USA
}
\email{mart1789@purdue.edu}
\author{Manuel Rivera}
\address{
Manuel Rivera\\
Purdue University\\Department of Mathematics\\ 150 N. University St. West Lafayette, IN 47907, USA
}
\email{manuelr@purdue.edu}
\begin{document}
\singlespacing
\maketitle
\begin{abstract}{
 Given a commutative ring, $R$, a $\pi_1$-$R$-equivalence is a continuous map of spaces inducing an isomorphism on fundamental groups and an $R$-homology equivalence between universal covers.
 When $R$ is an algebraically closed field, Raptis and Rivera described a full and faithful model for the homotopy theory of spaces up to $\pi_1$-$R$-equivalence.
 They use simplicial coalgebras considered up to a notion of weak equivalence created by a localized version of the Cobar functor.
 In this article, we prove a $G$-equivariant analog of this statement using a generalization of a celebrated theorem of Elmendorf.
 We also prove a more general result about modeling $G$-simplicial sets considered under a linearized version of quasi-categorical equivalence in terms of simplicial coalgebras. 
}
\end{abstract}

\section{Introduction}

Equivariant algebraic topology is the study of algebraic invariants of topological spaces equipped with an action of a fixed group, $G$, also known as $G$-\textit{spaces}.
In equivariant homotopy theory, one considers $G$-equivariant continuous maps between $G$-spaces up to the appropriate notion of homotopy.  The present article is motivated by answering the general question:

\begin{quote}
\textit{What is a $G$-equivariant homotopy type in terms of chain level (co)algebraic structure?}
\end{quote}

We give an answer to a version of this question when the coefficients lie in an algebraically closed field of arbitrary characteristic. To do this, we use the framework of simplicial cocommutative coalgebras considered under an appropriate notion of weak equivalence. We also analyze the case when coefficients are taken over an arbitrary perfect field. 

By an equivariant version of the Whitehead's theorem, a map $f \colon X \to Y$ between $G$-CW complexes is a $G$-homotopy equivalence if and only if $f$ is a $G$-weak homotopy equivalence, i.e. for all subgroups $H$ of $G$ the induced maps
\[\pi_n(f^H) \colon \pi_n(X^H,x) \to \pi_n(Y^H,f(x)),\]
on all homotopy groups between $H$-fixed point spaces are isomorphisms.
We shall consider a generalization of this notion for any commutative ring $R$.
For technical reasons, we use the framework of reduced simplicial sets (simplicial sets with a single vertex). 
First, we say a map $f \colon X \to Y$ of reduced simplicial sets is a $\pi_1$-$R$-\textit{equivalence} if
\[\pi_1(f) \colon \pi_1(|X|)\to \pi_1(|Y|),\] and 
\[H_n(\widetilde{|f|};R) \colon H_n(\widetilde{|X|};R) \to H_n(\widetilde{|Y|}; R), \text{ for all $n \geq 0$},\] 
are isomorphisms, where $|-|$ denotes the geometric realization functor, $\pi_1$ the fundamental group, $\widetilde{|f|} \colon \widetilde{|X|} \to \widetilde{|Y|}$ the map induced by $f$ at the level of universal covers, and $H_n( -;R)$ the $n$-th homology functor with $R$-coefficients. 
We promote this notion to the $G$-equivariant setting by defining a $G$-equivariant map $f \colon X \to Y$ of $G$-simplicial sets to be a $G$-$\pi_1$-$R$-\textit{equivalence} if the induced map $f^H \colon X^H \to Y^H$ on all simplicial sets of $H$-fixed points is a $\pi_1$-$R$-equivalence for all subgroups $H$ of $G$. 
Notice that a $\pi_1$-$\mathbb{Z}$-equivalence is a weak homotopy equivalence, so a $G$-$\pi_1$-$\mathbb{Z}$-equivalence is a $G$-weak homotopy equivalence.

Our main result is that, when $R=\mathbb{F}$ is an algebraically closed field, a $G$-equivariant version of the $\mathbb{F}$-chains functor induces a full and faithful embedding from the homotopy theory of $G$-simplicial sets up to $G$-$\pi_1$-$\F$-equivalence into a suitable homotopy theory of $G$-connected simplicial cocomutative $\F$-coalgebras. In particular, any $G$-reduced simplicial set may be recovered functorially, up to $G$-$\pi_1$-$\F$-equivalence, from its simplicial coalgebra of chains. To state this precisely, we recall some (co)algebraic notions. 

A \textit{connected  simplicial cocommutative $R$-coalgebra} is a simplicial object in the category of cocommutative coassociative $R$-coalgebras which has a single generator in degree $0$. An example of such an object is the simplicial coalgebra of chains on a simplicial set defined as follows. 
\\

\noindent \textbf{Definition.}
 For any commutative ring with unit $R$ and any simplicial set $X$, define the \textit{simplicial coalgebra of $R$-chains} $R[X]$ to be the simplicial coalgebra given in degree $n$ by the free $R$-module generated by the set $X_n$ of $n$-simplices with coproduct induced by linearly extending the diagonal map $\sigma \mapsto \sigma \otimes \sigma$ on any $\sigma \in X_n$. The face and degeneracy maps of $R[X]$ are induced by those in $X$. If $G$ is a discrete group, a $G$-action on a simplicial set $X$ induces a natural $G$-action on the simplicial coalgebra of chains $R[X]$ giving rise a functor
\[ R_G[ - ] \colon G\text{-}\mathsf{sSet}_0 \to G\text{-}\mathsf{sCoCoalg}_R^0,\]
from the category of $G$-reduced simplicial sets and $G$-connected simplicial cocommutative coalgebras.
\\

 This functor has a right adjoint denoted by
$\mathcal{P}_G \colon G\text{-}\mathsf{sCoCoalg}_R^0 \to G\text{-}\mathsf{sSet}_0.$
The normalized chains construction takes a connected simplicial cocommutative coalgebra $C$ and produces a differential graded (dg) coaugmented coassociative $R$-coalgebra $\mathcal{N}_*(C)$.
A morphism $f \colon C \to D$ of connected simplicial cocomutative $R$-coalgebras is said to be a $\mathbb{\Omega}$-\textit{quasi-isomorphism} if the induced map of dg coaugmented coalgebras
$\mathcal{N}_*(f) \colon \mathcal{N}_*(C) \to \mathcal{N}_*(D)$ becomes a quasi-isomorphism of dg algebras after applying the \textit{Cobar construction}, a classical functor that produces a dg (free) algebra given a coaugmented dg coalgebra. 
 
We consider a localized version of this notion from \cite{RR22}, called $\widehat{\mathbb{\Omega}}$-\textit{quasi-isomorphism}, which involves localizing a particular set of degree $0$ elements after applying the Cobar construction. Under suitable hypotheses, any $\mathbb{\Omega}$-quasi-isomorphism is an $\widehat{\mathbb{\Omega}}$-quasi-isomorphism and any $\widehat{\mathbb{\Omega}}$-quasi-isomorphism is a quasi-isomorphism and these inclusions are strict. These notions can be understood in the $G$-equivariant setting by considering fixed points with respect to all subgroups: a $G$-equivariant morphism $f \colon C \to D$ of $G$-connected simplicial cocomutative $R$-coalgebras is said to be a $G\text{-}\mathbb{\Omega}$-\textit{quasi-isomorphism} if the map on $H$-fixed points $f \colon C^H \to D^{H}$ is a  $\mathbb{\Omega}$-\textit{quasi-isomorphism} for all subgroups $H$ of $G$. 
The notion of $G$-$\widehat{\mathbb{\Omega}}$-\textit{quasi-isomorphism} is defined similarly. Our main theorem, in the language of model category theory, is the following. 
\newpage
\begin{customthm}{1}\label{one} Let $G$ be a group and $R$ a commutative ring. 
\begin{enumerate}
\item There exists a model category structure on $G$-$\mathsf{sSet}_0$ with weak equivalences being $G$-${\pi_1}$-$R$-equivalences and cofibrations generated by the collection of all morphisms 
\[\underset{G/H}{\sqcup} i \colon \bigsqcup_{G/H} X \hookrightarrow \bigsqcup_{G/H} Y,\] where $i \colon X \hookrightarrow Y$ a monomorphism of reduced simplicial sets, $H$ a subgroup of $G$, and the $G$-action on $\bigsqcup_{G/H} X$ and $\bigsqcup_{G/H} Y$ is induced by the action of $G$ on the set of left cosets $G/H$. 
\item When $R=\mathbb{F}$ is a field, there exists a model category structure on $G$-$\mathsf{sCoCoalg}_{\mathbb{F}}^0$ with weak equivalences being $G$-$\widehat{\mathbb{\Omega}}$-quasi-isomorphisms and cofibrations generated by the collection of all morphisms 
\[\underset{G/H}{\oplus} j \colon \bigoplus_{G/H} C \hookrightarrow \bigoplus_{G/H} D,\] where $j \colon C \hookrightarrow D$ a injection of connected simplicial cocommutative $\mathbb{F}$-coalgebras, $H$ a subgroup of $G$, and the $G$-action on $\bigoplus_{G/H} C$ and $\bigoplus_{G/H} D$ is induced by the action of $G$ on the set of left cosets $G/H$. 
\item The adjunction
\[\F_G[-]: G\text{-}\sS_0 \rightleftarrows G\text{-}\sCC_\F^0: \mathcal{P}_G,\]
becomes a Quillen adjunction between the model category structures of (1) and (2).
\item If $\F$ is an algebraically closed field, $\F_G[-]$ is homotopically full and faithful. More generally, if $\F$ is a perfect field, the derived unit of the Quillen adjunction in (3) can be identified with the natural transformation $X \mapsto \delta_G(X)^{h\mathbb{G}}$, where $\mathbb{G}$ denotes the absolute Galois group of $\F$, $\delta_G(X)$ the $G$-simplicial set $X$ equipped with the trivial $\mathbb{G}$-action, and $(-)^{h \mathbb{G}}$ the homotopy $\mathbb{G}$-fixed points interpreted appropriately.
\end{enumerate}
\end{customthm}
There are three main components to the proof of the above theorem.
\begin{enumerate}
    \item Theorem \ref{one} is the $G$-equivariant analogue of a result of the second author and G. Raptis. It was shown in  \cite{RR22} that the simplicial coalgebra of chains $\F[-] \colon \mathsf{sSet}_0 \to \mathsf{sCoCoalg}^0_{\F}$ defines a left Quillen functor between model structures for reduced simplicial sets up to $\pi_1$-$\F$-equivalence and connected simplicial cocommutative coalgebras up to $\widehat{\mathbb{\Omega}}$-quasi-isomorphism and is homotopically full and faithful when $\F$ is algebraically closed. A similar result for two other pairs of model categories on $\mathsf{sSet}_0$ and $\mathsf{sCoCoalg}^0_{\F}$ is also shown in \cite{RR22}: the first treats $\mathsf{sSet}_0$ under an $\F$-lineraized version of quasi-categorical equivalence and $\mathsf{sCoCoalg}^0_{\F}$ under $\mathbb{\Omega}$-quasi-isomorphisms; the second treats $\mathsf{sSet}_0$ under $\F$-homology equivalence and $\mathsf{sCoCoalg}^0_{\F}$. We also prove versions of Theorem \ref{one} in these two other settings.
    \item To prove $(1)$ and $(2)$ of Theorem \ref{one}, we apply a criteria for the existence of the \textit{fixed point model structure} on the $G$-equivariant category $G$-$\mathsf{C}$ given a cofibrantly generated model category $\mathsf{C}$, see \cite{Ste16}.  
    \item To prove $(3)$ and $(4)$ of Theorem \ref{one}, we use a generalization of a theorem of Elmendorf that allows us to interpret the fixed point model structure on $G$-$\mathsf{C}$, for suitable model categories $\mathsf{C}$, as the projective model structure on the category of functors $\mathcal{O}_G^{op} \to \mathsf{C}$, where $\mathcal{O}_G$ denotes the orbit category of the group $G$, see \cite{Elm83, pia91}.  
\end{enumerate}

The present text continues a line of research that started with promoting algebraic models for the rational homotopy theory of simply connected spaces to the $G$-equivariant setting, see  \cite{Tri82, Scu08}. In \cite{MaSc02}, the authors build upon \cite{Ma01} to describe a full and
faithful model for the homotopy theory of $G$-simply connected spaces of finite type under
$\mathbb{F}_p$-equivalence in terms of diagrams of dg $E_{\infty}$-$\mathbb{F}_p$-algebras over the orbit category of $G$. In the present work, we remove any restrictions on the fundamental group as well as the finite type hypotheses by building upon \cite{RR22}. We also consider $G$-simplicial sets under a linearized notion of quasi-categorical equivalence and describe a model for these in terms of simplicial coalgebras.

The intended future applications for the present work are similar to those found in rational homotopy theory. For example, once one has a chain level structure that naturally characterizes $G$-spaces, the next step is to develop an algebraic theory that allows for extracting the ``smallest possible" model in the weak equivalence class of such structure from which one can efficiently compute topological and geometric invariants. 

\subsection*{Acknowledgements}
The first author would like to acknowledge support of NSF grant DGE-1842166. The second author acknowledges support by NSF Grant DMS 2405405 and by Shota Rustaveli National Science Foundation of Georgia (SRNSFG) [grant number FR-23-5538].

\section{Preliminaries}
In this section we recall the relevant non-equivariant notions and results from \cite{RR22} that will be promoted to the equivariant context in subsequent sections.

\subsection{Simplicial Sets}
Let $\mathbb{\Delta}$ be the category whose objects are the non-empty finite ordinals $\{ [n]  =\{0<\ldots < n\} \mid n \in \mathbb{N}\}$ and morphisms are given by order preserving maps. 
Recall that any morphism in the category can be written as a composition of coface maps $d^i : [n-1] \rightarrow [n]$ and codegeneracy maps $s^i: [n+1] \rightarrow [n]$ for $i = 0,\ldots,n. $
A \emph{simplicial object in a category, $\sfC$}, is a functor $F: \mathbb{\Delta}^{op} \rightarrow \sfC$. 
Simplicial objects in $\sfC$ form a category, denoted by $\mathsf{sC}$, with natural transformations as morphisms. 
For any $F \in \mathsf{sC}$, we write $F_n := F([n])$.

For the purposes of this paper, we will focus our attention to simplicial objects in the category sets, denoted $\sS$, and in the category of coalgebras over a ring $R$, denoted $\sC_R$.
We will discuss the latter the next subsection.
We denote by $\sS_0$ the full subcategory of $\sS$ whose objects are all simplicial sets $S$ such that $S_0$ is a singleton.The objects of $\sS_0$ are called \emph{reduced simplicial sets}.

\begin{definition}\label{def2.1} Let $R$ be a commutative ring.
We say a map of reduced simplicial sets $f : X \rightarrow Y$ is a: 
\begin{enumerate}
    \item \textit{categorical $R$-equivalence} if it becomes a quasi-isomorphism of dg $R$-algebras after applying the functor $\Lambda : \sS_0 \rightarrow \dgA_R$, where  $\Lambda$ is the restriction to $\sS_0$ (the ``one-object" setting) of the functor $\sS \to \mathsf{dgCat}_R$ that is the left adjoint of the dg nerve functor $\mathsf{N}_{\mathsf{dg}} : \mathsf{dgCat}_R \rightarrow \sS$ from dg categories to simplicial sets (see \cite[Construction 1.3.1.6]{ha})
    \item \textit{$\pi_1$-$R$-equivalence} if it induces an isomorphism between the fundamental groups of the geometric realization (i.e. $\pi_1(|f|)$ is an isomorphism) and an $R$-homology isomorphism 
    \[H_*(\widetilde{|f|};R) \colon H_*(\widetilde{|X|};R) \xrightarrow{\cong} H_*(\widetilde{|Y|};R),\]
    between universal covers of the geometric realization, and
    \item \textit{$R$-equivalence} if it induces an isomorphism
   \[ H_*(f;R) \colon H_*(X;R) \xrightarrow{\cong} H_*(Y;R),\]
    in $R$-homology.
\end{enumerate}
\end{definition} 
The first notion above can be thought of as a ``linearized'' version of categorical equivalence ( Joyal equivalence) between reduced simplicial sets. When $R=\mathbb{Z}$, the second notion coincides with that of weak homotopy equivalence of the simplicial sets. 
It is worth noting that there is a result that relates these three definitions.
\begin{proposition} (Proposition 5.3.6 in \cite{RR22}
    The classes of morphisms in $\sS_0$ described in (1),(2), and (3), in Definition \ref{def2.1}, satisfy the following strict inclusions, 
    $$(1) \subset (2) \subset (3).$$
\end{proposition}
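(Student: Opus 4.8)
The plan is to prove the two inclusions and then exhibit explicit maps witnessing strictness, leaving the most substantive work for $(1)\subseteq(2)$. I would dispatch $(2)\subseteq(3)$ first, by a covering‑space argument. Given a $\pi_1$-$R$-equivalence $f\colon X\to Y$, set $\pi:=\pi_1(|X|)$ and use the isomorphism $\pi_1(f)\colon\pi\xrightarrow{\cong}\pi_1(|Y|)$ to regard $\widetilde{|Y|}$ as a free $\pi$-space. Then $C_*(\widetilde{|X|};R)$ and $C_*(\widetilde{|Y|};R)$ are bounded-below complexes of free $R[\pi]$-modules with $C_*(|X|;R)\cong R\otimes_{R[\pi]}C_*(\widetilde{|X|};R)$, and similarly for $Y$, and $\widetilde{|f|}$ is an $R[\pi]$-linear quasi-isomorphism by hypothesis. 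Its mapping cone is a bounded-below acyclic complex of projective $R[\pi]$-modules, hence contractible, and contractibility is preserved by $R\otimes_{R[\pi]}-$; so $H_*(f;R)$ is an isomorphism.

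For $(1)\subseteq(2)$ I would pass through the cobar construction. Using the identification (recalled in \cite{RR22}, building on the dg nerve of \cite{ha}) of $\Lambda$ on $\sS_0$ with the cobar functor $\Omega\widetilde{C}_*(-;R)$ on normalized chains, together with the facts that the localized cobar $\widehat{\Omega}\widetilde{C}_*(-;R)$ is naturally quasi-isomorphic to the chains $C_*(\mathbf{G}X;R)$ on the Kan loop group and that $\widehat{\Omega}$ is obtained from $\Omega$ by a localization preserving quasi-isomorphisms, a categorical $R$-equivalence $f$ makes $C_*(\mathbf{G}f;R)$ a quasi-isomorphism, i.e. $\mathbf{G}f$ an $R$-homology equivalence. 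From this I would recover the two conditions defining a $\pi_1$-$R$-equivalence. Taking $H_0$ yields that $\pi_1(f)$ induces an isomorphism of group algebras $R[\pi_1(|X|)]\xrightarrow{\cong}R[\pi_1(|Y|)]$, and a short lemma upgrades this to $\pi_1(f)$ itself being an isomorphism (surjectivity because the image is the free $R$-submodule on a subset of a basis, forcing that subset to be all of the basis; injectivity because the kernel of $R[\pi_1(|X|)]\twoheadrightarrow R[\pi_1(|X|)/\ker\pi_1(f)]$ is the ideal generated by $\{g-1:g\in\ker\pi_1(f)\}$, which vanishes only when $\ker\pi_1(f)$ is trivial). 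Writing the simplicial group $\mathbf{G}X$ as the disjoint union $\bigsqcup_{\pi_1(|X|)}(\mathbf{G}X)^{(0)}$ of cosets of its identity component, the induced map $(\mathbf{G}f)^{(0)}$ is then an $R$-homology equivalence of connected simplicial groups; applying the classifying-space functor $\overline{W}$ — which on connected simplicial groups computes $R$-homology through the bar construction, hence preserves $R$-homology equivalences — and using the identification $\overline{W}\big((\mathbf{G}X)^{(0)}\big)\simeq\widetilde{|X|}$ arising from the fibration $\overline{W}(\mathbf{G}X)^{(0)}\to X\to B\pi_1(|X|)$, I conclude that $\widetilde{|f|}$ is an $R$-homology equivalence. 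Hence $f$ is a $\pi_1$-$R$-equivalence.

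For strictness I would produce explicit examples. For $(2)\subsetneq(3)$: if $\pi$ is any nontrivial acyclic group (such groups exist, and integral acyclicity forces $R$-acyclicity for every $R$; when $R$ is a $\mathbb{Q}$-algebra one may instead take $\pi$ a nontrivial finite group), then $B\pi\to\Delta^0$ is an $R$-equivalence of reduced simplicial sets that is not a $\pi_1$-$R$-equivalence. For $(1)\subsetneq(2)$: the standard weak equivalence $\Delta^1/\partial\Delta^1\hookrightarrow\overline{W}\mathbb{Z}$ of reduced models of the circle is a $\pi_1$-$R$-equivalence but not a categorical $R$-equivalence, since on $H_0$ the functor $\Omega\widetilde{C}_*(-;R)$ sends it to the non-surjective inclusion $R[x]\hookrightarrow R[\mathbb{Z}]=R[\pi_1]$ of the monoid algebra of $\mathbb{N}$ into the group algebra of $\mathbb{Z}$; further examples separating the three classes are recorded in \cite{RR22}.

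The step I expect to be the main obstacle is $(1)\subseteq(2)$: it rests on the nontrivial identification of $\Lambda$ with the cobar of chains and on the non–simply-connected extension of Adams' cobar theorem, and then requires careful bookkeeping to extract both the fundamental group and the homology of the universal cover from a single $R$-homology equivalence of Kan loop groups.
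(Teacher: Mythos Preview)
The paper does not supply its own proof of this proposition: it is simply quoted from \cite{RR22} (as Proposition 5.3.6 there) and cited without argument. So there is no in-paper proof to compare your proposal against.

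That said, your outline is sound and tracks the strategy of \cite{RR22}. The covering-space/contractible-cone argument for $(2)\subseteq(3)$ is standard and correct. For $(1)\subseteq(2)$ you correctly identify the key inputs: the identification of $\Lambda$ with the cobar of normalized chains, the extended Adams theorem giving $\widehat{\mathbb{\Omega}}\,\mathcal{N}_*(\mathbb{F}[X])\simeq C_*(\mathbf{G}X;R)$, and the passage from an $R$-homology equivalence on Kan loop groups to both a $\pi_1$-isomorphism (via $H_0$ and the group-ring lemma) and an $R$-homology equivalence on universal covers (via $\overline{W}$ of the identity component). Your strictness examples are also the expected ones. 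The only place to be a little more careful is the claim that $\overline{W}$ applied to a map of connected simplicial groups that is an $R$-homology equivalence yields an $R$-homology equivalence; this is true, but the cleanest justification is via the bar spectral sequence (or the associated filtration on $\overline{W}$), and you should flag that this is where the ``connected'' hypothesis is actually used.
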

 
Furthermore, these notions give rise to three model category structures.

\begin{theorem}\label{t2.1.3} (Theorem 1 in \cite{RR22})
    Let $R$ be a commutative ring. 
    The category $\sS_0$ admits three left proper combinatorial model category structures, denoted by ($\sS_0$, $R$-cat-eq.), ($\sS_0$, $\pi_1$-$R$-eq.), and ($\sS_0$, $R$-eq.) with the weak equivalences listed above, respectively, and monomorphisms as cofibrations.
\end{theorem}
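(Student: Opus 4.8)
\medskip

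\noindent\textbf{Proof proposal.}
Following \cite{RR22}, the plan is to produce each of the three structures directly from a recognition theorem of J.~Smith for combinatorial model categories, and then to read off left properness for free. Fix the standard set $I$ of generating monomorphisms of the locally presentable category $\sS_0$, so that $I\text{-cof}$ is the class of all monomorphisms, and for $i=1,2,3$ let $W_i$ be the class of maps in $\sS_0$ appearing in item $(i)$ of Definition~\ref{def2.1}. Smith's theorem yields a combinatorial model structure on $\sS_0$ with cofibrations $I\text{-cof}$ and weak equivalences $W_i$ provided: (a) $W_i$ satisfies two-out-of-three and is closed under retracts; (b) every $I$-injective map lies in $W_i$; (c) $I\text{-cof}\cap W_i$ is closed under pushout and transfinite composition; (d) $W_i$ is an accessible subcategory of the arrow category of $\sS_0$, equivalently, satisfies the solution-set condition at $I$. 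Granting (a)--(d): in the resulting model category every object is cofibrant, since $\Delta^0$ is initial and $\Delta^0\hookrightarrow X$ is a monomorphism, and a model category in which every object is cofibrant is left proper; and by (d) the structure is the left Bousfield localization of the classical model structure on $\sS_0$ (monomorphisms as cofibrations, weak homotopy equivalences as weak equivalences) at a set of maps generating $W_i$ as a localizer, so by the strict inclusions $(1)\subset(2)\subset(3)$ recalled above the three structures form a tower of successive localizations.

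It remains to verify (a)--(d). Conditions (a) and (b) are immediate: each $W_i$ is the preimage, under a functor, of the class of quasi-isomorphisms (for $i=1,2$) or isomorphisms (for $i=3$) --- the functors being $\Lambda$ for $i=1$, a functorial simplicial model of the $R$-chains on the universal cover together with $\pi_1$ of the realization for $i=2$, and $C_*(-;R)$ for $i=3$ --- and those target classes have property (a); moreover any $I$-injective map is a weak homotopy equivalence, hence lies in each $W_i$, being carried to a quasi-isomorphism by $\Lambda$ (which is homotopy invariant on $\sS_0$ because it computes the $R$-chains on the based loop space) and plainly inducing isomorphisms on $\pi_1$ and on the homology of universal covers. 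For (d) one checks that each defining functor preserves $\kappa$-filtered colimits for some regular $\kappa$: $\Lambda$ and $C_*(-;R)$ do so as left adjoints, the universal-cover construction can be arranged --- e.g.\ as the pullback of the classifying map $X\to\overline{W}\pi_1$ along $E\pi_1\to\overline{W}\pi_1$ --- to commute with sufficiently filtered colimits, and $\pi_1$ and $H_*(-;R)$ commute with filtered colimits; since ``is a quasi-isomorphism'' and ``is an isomorphism'' are accessible conditions on arrows, each $W_i$ is accessible. For $i=3$ all of this is classical, the resulting structure being Bousfield's $HR$-localization of reduced simplicial sets.

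The main obstacle is condition (c): one must show that a monomorphism $j\colon A\hookrightarrow B$ lying in $W_i$ stays in $W_i$ after pushout along an arbitrary monomorphism $A\hookrightarrow A'$ (closure under transfinite composition then follows from the filtered-colimit compatibility already established for (d)). For $i=3$ this is formal: since $A\hookrightarrow A'$ is a cofibration and the classical structure is left proper, the square is a homotopy pushout, so excision/Mayer--Vietoris in $R$-homology gives $H_*(A'\cup_A B,A';R)\cong H_*(B,A;R)=0$. For $i=1$ one uses that $\Lambda$, being a left adjoint, takes the pushout to a pushout of dg algebras, and that by the explicit cobar/necklace description $\Lambda$ sends monomorphisms of reduced simplicial sets to cofibrations of dg algebras; thus $\Lambda(j)$ is a trivial cofibration, and trivial cofibrations are stable under pushout in any model category. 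The delicate case is $i=2$, which requires a van Kampen analysis: the fundamental group of $A'\cup_A B$ is the amalgam $\pi_1 A'\ast_{\pi_1 A}\pi_1 B$, which the inclusion of $A'$ identifies with $\pi_1 A'$ because $\pi_1 A\to\pi_1 B$ is already an isomorphism; and the universal cover of $A'\cup_A B$ is built from the universal cover of $A'$ together with copies of a connected cover of $B$, glued along copies of the corresponding connected cover of $A$, where --- since $\pi_1 A\to\pi_1 B$ is an isomorphism and $\widetilde A\to\widetilde B$ is an $R$-homology equivalence of free $\pi_1 A$-complexes --- those corresponding covers of $A$ and $B$ are again related by $R$-homology equivalences. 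An excision/Mayer--Vietoris argument then upgrades this to the desired $R$-homology equivalence $\widetilde{A'}\to\widetilde{A'\cup_A B}$. Making the universal-cover construction functorial and colimit-compatible, and tracking the change of fundamental group through these gluings, is where I expect most of the work to lie; with (a)--(d) in hand, Smith's theorem delivers the three combinatorial model structures, and left properness follows as noted.
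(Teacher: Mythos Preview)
The paper does not prove this statement: Theorem~\ref{t2.1.3} is stated purely as a recall of Theorem~1 in \cite{RR22}, with no argument given here. So there is no ``paper's own proof'' to compare your proposal against; any comparison must be to the original argument in \cite{RR22}.

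That said, your outline is indeed the strategy of \cite{RR22}: one verifies the hypotheses of Smith's recognition theorem for the locally presentable category $\sS_0$ with $I$ a generating set of monomorphisms and $W_i$ the indicated class, and left properness comes for free because every object is cofibrant. Your identification of condition~(c) as the crux, and of the $i=2$ case as the most delicate, matches what happens in \cite{RR22}.

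Two places in your sketch deserve more caution. First, for $i=1$ you assert that $\Lambda$ sends monomorphisms of reduced simplicial sets to cofibrations of dg algebras; this is not immediate from the description of $\Lambda$ as a left adjoint, and in \cite{RR22} the stability of $W_1\cap I\text{-cof}$ under pushout is handled via an explicit analysis of $\mathbb{\Omega}$ applied to pushouts of simplicial coalgebras rather than by a one-line ``trivial cofibrations are stable'' argument in the target model category. Second, for $i=2$ your van~Kampen/universal-cover gluing sketch is on the right track, but the passage from ``$\widetilde{A}\to\widetilde{B}$ is an $R$-homology equivalence of free $\pi_1A$-complexes'' to ``the relevant intermediate covers are $R$-homology equivalent'' and then to the Mayer--Vietoris conclusion requires a careful functorial model of the universal cover compatible with pushouts along monomorphisms; in \cite{RR22} this is handled through the localized cobar functor $\widehat{\mathbb{\Omega}}$ and its relation to chains on the universal cover, rather than by a direct topological covering-space argument. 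Your proposal is a reasonable high-level roadmap, but these two steps are where the actual work in \cite{RR22} lies, and your sketch does not yet supply it.
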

For most of this text, we consider the case when $R$ is a field $\F$ and to simplify notation we denote $\mathbb{F}$-cat-eq. as ``$\simeq_1$'', $ \pi_1\text{-}\mathbb{F}$-eq. as ``$\simeq_2$'', and $\mathbb{F}$-eq. as ``$\simeq_3$''.

\subsection{Simplicial Coalgebras}

Fix a commutative ring $R$ and we write $\otimes$ to mean $\otimes_R$. 
Denote the category of countial coassociative $R$-coalgebras as $\Coalg_R$. 
The objects in $\Coalg_R$ are triples $(C, \Delta, \epsilon)$ with $C$ being an $R$-module, $\Delta:C \rightarrow C \times C$  being the coassociative coproduct of $C$, and  $\epsilon: C \rightarrow R$ being the counit of $C$.
The morphisms in this category are $R$-linear maps preserving the coproduct and counit. 
We denote by $\CoCo_R$ the category of cocommutative $R$-coalgebras. 
This is the full subcategory of $\Coalg_R$ consisting of those coalgebras $(C,\Delta,\epsilon)$ for which the coproduct $\Delta$ is cocommutative, i.e., it satisfies the equation $\Delta = \tau \circ \Delta$, where the $R$-linear map $\tau : C \otimes C \rightarrow C \otimes C$ is the switch map.

A simplicial (cocommutative) coalgebra is a simplicial object in $\Coalg_R$ ($\CoCo_R$). Explicitly, a simplicial coalgebra is a functor $C : \Delta^{op} \rightarrow \Coalg_R$, but can also be described as a simplicial $R$-module, $C$, equipped with maps of simplicial $R$-modules
$$\Delta : C \rightarrow C \otimes C \quad \text{and} \quad  \epsilon: C \rightarrow R,$$
where $(C \otimes C)_n$ = $C_n \otimes C_n$. 
We say a simplicial coalgebra is \emph{connected} if $C_0 = (R, \Delta_0, \text{id}_R)$ where \(\Delta_0: R \xrightarrow{\cong} R \otimes R \) is given by \(\Delta_0(1) = 1 \otimes 1\).
For the remainder of this text, denote by $\sC^0_R$ the full subcategory of $\sC_R$ consisting of connected simplicial coalgebras and define the category $\sCC^0_R$ similarly.
Analogously, we make the following definitions.

\begin{definition}\label{def2.2.1} 
We say a map of simplicial connected coalgebras $f : C \rightarrow C'$ is a:
\begin{enumerate}
    \item \textit{$\mathbb{\Omega}$-quasi-isomorphism} if the induced map dg algebras, $\mathbb{\Omega}(f)$, is a quasi-isomorphism of dg algebras, where $\mathbb{\Omega} := \mathsf{Cobar} \circ \mathcal{N}_*$ (described in Figure \ref{functors}).
    \item \textit{$\widehat{\mathbb{\Omega}}$-quasi-isomorphism} if the induced map of dg algebras, $\widehat{\mathbb{\Omega}}(f)$, is quasi-isomorphism of augmented dg algebras, where $\widehat{\mathbb{\Omega}} : = \mathcal{L}_{\iota} \circ \mathfrak{X}$ (described in Figure \ref{functors}). 
    \item \textit{quasi-isomorphism} if the induced map on normalized chains, $\mathcal{N}_*(f)$, induces an isomorphism on homology.
\end{enumerate}
\end{definition} 
In Figure \ref{functors}, we have summarized some of the functors in Definition \ref{def2.2.1}. For a precise definition of these functors we refer to \cite{RR22}; for the sake of completeness, we provide a short description of these with emphasis on intuition.
\begin{figure}[h]
\centering
\begin{tikzcd}[ampersand replacement=\&]
  \& \sCC_{\F}^{0} \arrow[ldd, "\mathbb{\Omega}", shift left=2] \arrow[ld, "\mathcal{N}_*"] \arrow[dd, "\mathfrak{X}", shift left=2] \arrow[rr, "\widehat{\mathbb{\Omega}}"] \&  
  \& \dgA_{\F} \\
\dgC_{\F} \arrow[d, "\mathsf{Cobar}"', shift right=2]   \&   \&  \&              \\
\dgA_{\F}  \& \dgA_{\F}^{+} \arrow[rruu, "\mathcal{L}_{\iota} "]   \&  \&         
\end{tikzcd}
\caption{Functors in Definition \ref{def2.2.1}}\label{functors}
\end{figure}
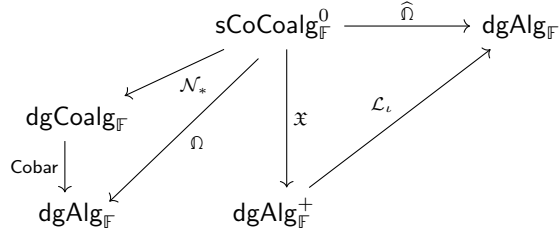
The arrow labeled $\mathcal{N}_*$ is the usual normalized chains functor which takes a simplicial 
coalgebra, $(C, \Delta, \epsilon)$, and produces a conilpotent dg coassociative coalgebra
$\mathcal{N}_*(C)$ with differential given by the alternating sum of face maps of $C$ and coproduct induced by the Alexander-Whitney diagonal approximation. The $\mathsf{Cobar}$ functor builds a dg algebra from a conilpotent dg coalgebra by first modding out by the cokernel of the coaugmentation, shifting the grading of the underlying dg module down by one, and then taking the tensor algebra. The differential is then defined by extending the sum of differential and coproduct of $\mathcal{N}_*(C)$ as a derivation.
It is worth noting that $\mathsf{Cobar}$ has a right adjoint known as the $\mathsf{Bar}$ functor given by the Bar construction. 

For any connected simplicial \textit{cocommutative} coalgebra $C$, the dg algebra $\mathbb{\Omega}(C)$ carries a natural dg \textit{bialgebra} structure. 
We denote the coproduct of $\mathbb{\Omega}(C)$ by $\nabla$. The arrow labeled by $\mathfrak{X}$ is a functor that produces a \textit{marked} dg algebra (a dg algebra equipped with a set of elements) given a connected simplicial cocommutative coalgebra. 
This functor is given by $\mathfrak{X}(C) = (\mathbb{\Omega}(C),P_C)$, where $P_C$, the set of representatives of the ``monoid-like" elements of the bialgebra $H_0(\mathbb{\Omega}(C))$.
Explicitly, \(P_C = \{a \in \mathbb{\Omega(C)}_0 : H_0(\nabla)([a]) = [a] \otimes [a], \epsilon([a]) = 1\}.\)
The functor $\mathcal{L}_{\iota}$ can be thought of as the derived localization of the dg algebra $\mathbb{\Omega(C)}$ at a marked set of elements. (The $\iota$ in the notation refers to a particular replacement to present this derived localization as a functorial model). 

Similarly as before, we have a result relating these three notions of  maps. 

\begin{proposition} (Proposition 5.3.5 in \cite{RR22})
For $R = \F$ be a field. 
    The classes of morphisms in $\sS_0$ described in (1),(2), and (3), in Definition \ref{def2.2.1}, satisfy the following strict inclusions:
    $$(1) \subset (2) \subset (3).$$
\end{proposition}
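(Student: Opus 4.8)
We read the statement as concerning the three classes of morphisms of connected simplicial cocommutative $\F$-coalgebras introduced in Definition~\ref{def2.2.1} (the setting of Figure~\ref{functors}). The plan is to obtain the two inclusions from the functorial properties of $\mathcal{L}_\iota$ and of the $\mathsf{Bar}$ construction, and then to produce explicit witnesses for strictness; throughout we use that $\F$ is a field (so that tensor products preserve quasi-isomorphisms) and that the coalgebras are cocommutative (so that $\mathbb{\Omega}$ lands in dg bialgebras).

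\emph{The inclusion $(1)\subseteq(2)$.} Let $f\colon C\to C'$ be an $\mathbb{\Omega}$-quasi-isomorphism. By cocommutativity, $\mathbb{\Omega}(f)$ is a morphism of dg bialgebras, hence compatible with the coproducts $\nabla$ and the counits; in particular it sends $P_C$ into $P_{C'}$, so $\mathfrak{X}(f)=\big(\mathbb{\Omega}(f)\colon(\mathbb{\Omega}(C),P_C)\to(\mathbb{\Omega}(C'),P_{C'})\big)$ is a morphism of marked dg algebras whose underlying map is a quasi-isomorphism. Then $H_0(\mathbb{\Omega}(f))$ is a bialgebra isomorphism, so it induces a bijection between the sets of monoid-like homology classes; thus $\mathfrak{X}(f)$ is a quasi-isomorphism of marked dg algebras with matching marks. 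Since $\mathcal{L}_\iota$ is a functorial model for the derived localization, and in particular a homotopy functor sending such maps to quasi-isomorphisms, $\widehat{\mathbb{\Omega}}(f)=\mathcal{L}_\iota(\mathfrak{X}(f))$ is a quasi-isomorphism. I would import the homotopy-invariance of $\mathcal{L}_\iota$ as a fact from \cite{RR22}.

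\emph{The inclusion $(2)\subseteq(3)$; this is where the real work lies.} Let $f$ be an $\widehat{\mathbb{\Omega}}$-quasi-isomorphism; the goal is that $\mathcal{N}_*(f)$ is a quasi-isomorphism. I would route this through the $\mathsf{Bar}$ construction. First, $\mathcal{N}_*(C)$ is conilpotent (as $C$ is connected), so the bar--cobar counit gives a natural quasi-isomorphism $\mathsf{Bar}(\mathbb{\Omega}(C))=\mathsf{Bar}(\mathsf{Cobar}(\mathcal{N}_*(C)))\xrightarrow{\sim}\mathcal{N}_*(C)$. Second, I claim the localization map $\ell_C\colon\mathbb{\Omega}(C)\to\widehat{\mathbb{\Omega}}(C)$ becomes a quasi-isomorphism after $\mathsf{Bar}$: the condition $\epsilon([a])=1$ says precisely that every $a\in P_C$ is sent to the unit of $\F$ by the augmentation of $\mathbb{\Omega}(C)$, so this augmentation extends along $\ell_C$ and the augmentation module $\F$ is one on which $P_C$ already acts invertibly; by the universal property of the derived localization $\F\otimes^{\mathbb{L}}_{\mathbb{\Omega}(C)}\widehat{\mathbb{\Omega}}(C)\simeq\F$, and associativity of the derived tensor product then identifies $\mathsf{Bar}(\widehat{\mathbb{\Omega}}(C))=\F\otimes^{\mathbb{L}}_{\widehat{\mathbb{\Omega}}(C)}\F$ with $\F\otimes^{\mathbb{L}}_{\mathbb{\Omega}(C)}\F=\mathsf{Bar}(\mathbb{\Omega}(C))$ along $\mathsf{Bar}(\ell_C)$. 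I expect this compatibility to already be recorded as a property of $\mathcal{L}_\iota$ in \cite{RR22}; if it is not, it is the single lemma demanding a careful proof, and I regard it as the main obstacle. Third, $\mathsf{Bar}$ is a homotopy functor on augmented dg algebras over a field (length filtration plus Künneth). Assembling these, $\mathcal{N}_*(f)$ is identified up to quasi-isomorphism with $\mathsf{Bar}(\mathbb{\Omega}(f))$, hence with $\mathsf{Bar}(\widehat{\mathbb{\Omega}}(f))$, which is a quasi-isomorphism by hypothesis. (A more topological alternative: by \cite{RR22}, $\widehat{\mathbb{\Omega}}(\F[X])$ computes $C_*(\Omega|X|;\F)$, so an $\widehat{\mathbb{\Omega}}$-quasi-isomorphism records compatible isomorphisms on $\pi_1$ and on the $\F$-homology of the universal cover, from which $H_*(-;\F)$, i.e.\ the homology of $\mathcal{N}_*$, is reconstructed via a Cartan--Leray/bar spectral sequence.)

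\emph{Strictness.} For $(1)\subsetneq(2)$, I would compare two reduced models of the circle: take $X=\Delta^1/\partial\Delta^1$, let $Y$ be the nerve of $\mathbb{Z}$, and set $f=\F[g]$ for a weak equivalence $g\colon X\to Y$. Since $\Omega g$ is a weak equivalence, $\widehat{\mathbb{\Omega}}(f)$ is identified with a quasi-isomorphism $C_*(\Omega X;\F)\to C_*(\Omega Y;\F)$, so $f$ lies in $(2)$; but a direct computation gives $\mathbb{\Omega}(\F[X])=\mathsf{Cobar}(C_*(S^1;\F))\cong\F[t]$ (concentrated in degree $0$, trivial differential), whereas $\mathbb{\Omega}(\F[Y])=\mathsf{Cobar}(\mathsf{Bar}(\F[\mathbb{Z}]))\simeq\F[\mathbb{Z}]$, and since $\F[t]$ and $\F[t,t^{-1}]$ are non-isomorphic rings $\mathbb{\Omega}(f)$ is not a quasi-isomorphism, so $f\notin(1)$. (This also makes visible what $\widehat{\mathbb{\Omega}}$ does: it inverts the monoid-like element $1+t$.) For $(2)\subsetneq(3)$, let $H\neq 1$ be an acyclic group (e.g.\ Higman's group), let $X$ be a reduced model of $BH$, let $Y=\Delta^0$, and set $f=\F[g]$ for the unique $g\colon X\to Y$. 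Then $H_*(\mathcal{N}_*(\F[X]))=H_*(BH;\F)=\F=H_*(\mathcal{N}_*(\F[Y]))$, so $f$ is a quasi-isomorphism; but the map induced by $f$ on $H_0(\widehat{\mathbb{\Omega}}(-))$ is the augmentation $\F[\pi_1(|X|)]=\F[H]\to\F$, which is not an isomorphism since $H\neq 1$, so $f\notin(2)$. Both witnesses are adapted from \cite{RR22}.
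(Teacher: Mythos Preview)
The paper does not prove this proposition. It is stated as an imported result---the header reads ``(Proposition 5.3.5 in \cite{RR22})'' and no argument is given in the text; the proposition is immediately followed by the statement of Theorem~\ref{t2.2.2}. (Note also that the paper's statement contains a typo: it says ``morphisms in $\sS_0$'' but Definition~\ref{def2.2.1} concerns morphisms of connected simplicial cocommutative coalgebras; you read this correctly.)

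So there is nothing in the present paper to compare your proposal against. Your argument is your own reconstruction of a proof that would have to be found in \cite{RR22}. For what it is worth, the outline you give is plausible: for $(1)\subseteq(2)$ you rely on $\mathcal{L}_\iota$ being a homotopy functor (which you correctly flag as something to import); for $(2)\subseteq(3)$ the crux is that $\mathsf{Bar}$ applied to the localization map $\mathbb{\Omega}(C)\to\widehat{\mathbb{\Omega}}(C)$ is a quasi-isomorphism, which you argue via the universal property of derived localization and the observation that $P_C$ already acts by units on the augmentation module $\F$---this is the right idea and you are right to single it out as the step requiring care; your strictness witnesses (the circle for $(1)\subsetneq(2)$, an acyclic group for $(2)\subsetneq(3)$) are standard and appropriate. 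But none of this can be checked against the paper under review, since the paper simply cites the result.
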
.

We recall a similar result to that of Theorem \ref{t2.1.3}.
\begin{theorem}\label{t2.2.2} (Theorem 2 in \cite{RR22})
     Let $\F$ be a field. 
     The category $\sCC^0_\F$ admits
three left proper combinatorial model category structures, denoted by ($\sCC^0_\F$, ${\mathbb{\Omega}}$-q.i.),
($\sCC^0_\F$, $\widehat{\mathbb{\Omega}}$-q.i.), and ($\sCC^0_\F$, q.i.), which have the injective maps as cofibrations.
\end{theorem}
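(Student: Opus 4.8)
The plan is to produce all three structures from the recognition theorem for combinatorial model categories (Jeff Smith's theorem), using one common generating set of cofibrations together with the three nested classes of weak equivalences, and then to deduce left properness from the observation that every object is cofibrant. (For the undecorated q.i.\ structure one may, alternatively, left-induce from the model category of simplicial $\F$-modules along the forgetful functor, which has a right adjoint given by the cofree coalgebra; but Smith's theorem applies uniformly.)

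First I would assemble the standing structural facts about $\sCC^0_\F$. It is locally presentable: the forgetful functor to simplicial $\F$-modules is comonadic and preserves filtered colimits by the fundamental theorem of coalgebras (every coalgebra over a field is the filtered union of its finite-dimensional subcoalgebras, a property inherited levelwise and compatible with cocommutativity and with the condition $C_0=\F$). Consequently the class of injective morphisms of $\sCC^0_\F$ is the weak saturation of a \emph{set} $I$ of injections between presentable objects; take this $I$ as the generating cofibrations in each of the three cases, so that $\mathrm{cof}(I)$ is exactly the class of injections. Finally, the coaugmentation $\F\hookrightarrow C$ from the initial object (the constant simplicial coalgebra $\F$) is a split injection for every connected $C$, so once any of the three structures is shown to exist it will have all objects cofibrant, hence be left proper.

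Next, fix one of the classes $W\in\{\mathbb{\Omega}\text{-q.i.},\ \widehat{\mathbb{\Omega}}\text{-q.i.},\ \text{q.i.}\}$ and verify Smith's hypotheses. The $2$-out-of-$3$ property and closure under retracts are immediate. Accessibility of $W$ as a subcategory of the arrow category holds because $\mathcal{N}_*$, $\mathsf{Cobar}$, $\mathfrak{X}$ and $\mathcal{L}_\iota$ each preserve filtered colimits (being built from left adjoints and the Dold--Kan equivalence), so $\mathbb{\Omega}$ and $\widehat{\mathbb{\Omega}}$ are accessible functors and ``becoming a quasi-isomorphism after applying them'' is an accessible condition, being the preimage of an accessible class. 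For the condition $\mathrm{inj}(I)\subseteq W$, I would analyze the morphisms with the right lifting property against all injections: over a field these are, up to retract, surjections that split and admit an explicit contracting homotopy on the ``acyclic part'', and since this explicit homotopy data is transported by $\mathbb{\Omega}$, such maps are already $\mathbb{\Omega}$-quasi-isomorphisms; as $\mathbb{\Omega}\text{-q.i.}\subset\widehat{\mathbb{\Omega}}\text{-q.i.}\subset\text{q.i.}$, the condition then holds for all three choices of $W$ at once. Finally, $W\cap\mathrm{cof}(I)$ is closed under pushout and transfinite composition: $\mathcal{N}_*$ carries injections to injective morphisms of dg coalgebras, whence $\mathbb{\Omega}$ (resp.\ $\widehat{\mathbb{\Omega}}$, resp.\ $\mathcal{N}_*$ itself for the q.i.\ case) carries injections to cofibrations of dg algebras (resp.\ of dg coalgebras with its standard model structure) and acyclic injections to acyclic cofibrations; since these functors preserve colimits and acyclic cofibrations are stable under pushout and transfinite composition in the targets, so is $W\cap\mathrm{cof}(I)$. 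Smith's theorem then produces, for each $W$, a combinatorial model structure with the injections as cofibrations and $W$ as weak equivalences, and left properness follows from cofibrancy of all objects.

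The main obstacle is the interaction of the Cobar construction with acyclicity, which is exactly the input needed both for $\mathrm{inj}(I)\subseteq\mathbb{\Omega}\text{-q.i.}$ and for sending acyclic injections to acyclic cofibrations of dg algebras. This is genuinely delicate because $\mathsf{Cobar}$ does \emph{not} preserve quasi-isomorphisms between arbitrary connected dg coalgebras --- a failure which is precisely what makes the three classes distinct --- so one cannot argue purely formally, and must instead work with explicit contractions for the trivial fibrations and with a filtration/d\'evissage of acyclic injections, exploiting the dg bialgebra structure on $\mathbb{\Omega}(C)$ and, in the $\widehat{\mathbb{\Omega}}$ case, the properties of the derived localization $\mathcal{L}_\iota$ recorded in Figure~\ref{functors}. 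As an alternative packaging I would keep in reserve the option of first establishing only the finest structure $(\sCC^0_\F,\mathbb{\Omega}\text{-q.i.})$ and then obtaining $(\sCC^0_\F,\widehat{\mathbb{\Omega}}\text{-q.i.})$ and $(\sCC^0_\F,\text{q.i.})$ as left Bousfield localizations at explicit sets of maps; this replaces the three parallel verifications by one verification together with two identifications of the localized weak equivalences with the desired classes, for which the accessibility of $\widehat{\mathbb{\Omega}}$ and $\mathcal{N}_*$ again does the work.
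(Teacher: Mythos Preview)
The paper does not contain a proof of this statement: Theorem~\ref{t2.2.2} is stated as a citation of Theorem~2 in \cite{RR22} and no argument is given here, so there is nothing in the present paper to compare your proposal against. Your outline via Jeff Smith's recognition theorem is a reasonable reconstruction of how such a result is typically established, and it correctly isolates the genuine difficulty (that $\mathsf{Cobar}$ does not preserve arbitrary quasi-isomorphisms, so the verification that trivial fibrations lie in $W$ and that $W\cap\mathrm{cof}(I)$ is closed under the relevant colimits cannot be purely formal); but to assess whether your sketch matches the actual proof you would need to consult \cite{RR22} directly, since this paper only invokes the result.
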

We further simplify notation and denote ${\mathbb{\Omega}}\text {-q.i.}$  as ``$\simeq_1'$'', $\widehat{\mathbb{\Omega}} \text {-q.i.}$. as ``$\simeq_2'$'', and q.i. as ``$\simeq_3'$.''
\subsection{Quillen Adjunctions}
In this section, we recall the model structures on $\sS_0$ and $\sCC_{\F}^0$ used in Proposition 7.3.3 in \cite{RR22}. 
We promote these to the equivariant setting in Section 4. 

First, we recall the functors involved in the adjunctions. 
\begin{definition}\label{d2.3.1}
The \textit{simplicial coalgebra of chains} functor
$$\F[-]: \sS_0 \rightarrow \sCC_\F^0,$$
is defined on any $X \in \sS_0$ as the $\F$ vector space $\F[X]_n = \F[X_n]$ equipped with the coproduct 
$$\Delta: \F[X] \rightarrow \F[X] \otimes \F[X]$$
determined by
$$x \mapsto x \otimes x,$$
for any $x \in X_n$. 
\end{definition}
Note that this definition makes sense for any ring and if $X \in G\text{-}\sS_0$ we obtain an induced $G$-action on $\F[X]$.
The right adjoint of this functor is defined as follows. 
\begin{definition}\label{d2.3.2}
The \emph{functor of $\F$-points} is the functor
$$\mathcal{P}: \sCC_\F^0 \rightarrow \sS_0,$$
given for any $C \in \sCC_\F^0$ by the reduced simplicial set $$\mathcal{P}(C)_n = \operatorname{Hom}(\F, C_n).$$
\end{definition}
Note that
$$\operatorname{Hom}(\F, C_n) \cong \{ x \in C_n | \Delta_n(x) = x \otimes x, \epsilon_n(x) = 1 \},$$
with $\Delta_n$ and $\epsilon_n$ denoting the coproduct and counit of $C_n$, respectively. 
Again, if $C \in G\text{-}\sCC_\F^0$ we obtain an induced $G$-action on $\mathcal{P}(C)$.  
\begin{theorem}\label{t2.3.3}(Proposition 7.3.3 and Theorem 8.2.1 in \cite{RR22}) Let $\mathbb{F}$ be a field.
The adjunctions:
$$
\begin{gathered}
\mathbb{F}[-]:\left(\mathsf{sSet}_0, \simeq_1 \right) \rightleftarrows\left(\mathsf{sCoCoalg}_{\mathbb{F}}^0, \simeq_1' \right): \mathcal{P}, \\
\mathbb{F}[-]:\left(\mathsf{sSet}_0, \simeq_2 \right) \rightleftarrows\left(\mathsf{sCoCoalg}_{\mathbb{F}}^0, \simeq_2 '\right): \mathcal{P}, \\
\mathbb{F}[-]:\left(\mathsf{sSet}_0, \simeq_3 \right) \rightleftarrows\left(\mathsf{sCoCoalg}_{\mathbb{F}}^0, \simeq_3' \right): \mathcal{P},
\end{gathered}
$$
are Quillen adjunctions. 
Moreover, if $\F$ is algebraically closed, the functor $\F[-]$ is homotopically full and faithful in all three cases.
\end{theorem}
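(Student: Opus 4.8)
The plan is to establish the three Quillen adjunctions first, and then to deduce homotopical full-faithfulness over an algebraically closed field by analyzing the derived unit; both parts treat the three cases uniformly through the comparison functors of Figure \ref{functors}. For the Quillen adjunctions, observe that $\F[-]$ carries a monomorphism $X\hookrightarrow Y$ of reduced simplicial sets to the injection $\F[X]\hookrightarrow\F[Y]$ induced by the inclusion of bases $X_n\subseteq Y_n$, so it preserves cofibrations in all three model structures. A left adjoint that preserves cofibrations and all weak equivalences is a left Quillen functor, so it suffices to see that $\F[-]$ preserves weak equivalences, and this is exactly what the comparison functors record: $\mathcal{N}_*(\F[X])$ is naturally the normalized $\F$-chain coalgebra of $X$, so $\F[-]$ sends $\F$-equivalences to quasi-isomorphisms; $\mathbb{\Omega}(\F[X])=\mathsf{Cobar}(\mathcal{N}_*(\F[X]))$ is naturally quasi-isomorphic to $\Lambda(X)$ (both model the chains on the based loop space of $|X|$; see \cite{RR22}), so $\F[-]$ sends categorical $\F$-equivalences to $\mathbb{\Omega}$-quasi-isomorphisms; and applying the derived localization $\mathcal{L}_{\iota}$ intertwines the passage to universal covers with the localization defining $\widehat{\mathbb{\Omega}}$, so $\F[-]$ sends $\pi_1$-$\F$-equivalences to $\widehat{\mathbb{\Omega}}$-quasi-isomorphisms. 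Read as bi-implications, these comparisons also show that $\F[-]$ \emph{reflects} weak equivalences in all three cases, a fact used below.

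For the full-faithfulness claim, recall that $\mathbb{L}\F[-]$ is homotopically full and faithful precisely when its derived unit is a natural weak equivalence. The ordinary unit $\eta_X\colon X\to\mathcal{P}(\F[X])$ is already an isomorphism: over a field, $\sum_i a_i x_i\in\F[X_n]$ is group-like with counit $1$ if and only if exactly one $a_i$ equals $1$ and all others vanish, so $\mathcal{P}(\F[X])_n=X_n$. Every reduced simplicial set is cofibrant (the point $\Delta^0$ injects into every object of $\sS_0$), so the derived unit at $X$ is, up to $\eta_X$, the map $\mathcal{P}(\rho)\colon X\to\mathcal{P}(C)$ where $\rho\colon\F[X]\xrightarrow{\sim}C$ is a fibrant replacement. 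Naturality of the counit $\epsilon$ together with the triangle identities expresses $\rho$, up to precomposition with the isomorphism $\F[\eta_X]$, as the composite $\F[X]\xrightarrow{\F[\mathcal{P}(\rho)]}\F[\mathcal{P}(C)]\xrightarrow{\epsilon_C}C$; since $\rho$ is a weak equivalence and $\F[-]$ reflects weak equivalences, two-out-of-three shows that $\mathcal{P}(\rho)$ is a weak equivalence if and only if the counit $\epsilon_C\colon\F[\mathcal{P}(C)]\to C$ is one. The theorem therefore reduces to proving that $\epsilon_C$ is a weak equivalence whenever $C$ is a fibrant replacement of some $\F[X]$.

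This last point is the heart of the matter and the only place where algebraic closure of $\F$ enters, so I expect essentially all of the difficulty to be concentrated here. Since $\mathcal{P}$ is right Quillen, $\mathcal{P}(C)$ is fibrant in $(\sS_0,\simeq_i)$, and it records only the group-like elements of $C$ — equivalently, the $\F$-algebra characters of its finite-dimensional subcoalgebras — so one must show these suffice to recover $C$ up to the appropriate equivalence. Over an algebraically closed field the simple cocommutative subcoalgebras are one-dimensional, so every nonzero cocommutative coalgebra has a group-like element; the real input is a Nullstellensatz-type statement to the effect that, after passing through $\mathsf{Cobar}$ and the localization $\mathcal{L}_{\iota}$, the $\F$-points of a sufficiently explicit fibrant model of $\F[X]$ reconstitute the relevant invariant of $X$ — its $\F$-homology, the chains on the based loop space of $|X|$, or the chains on the universal cover of $|X|$ together with its $\pi_1$-action, according to which of the three cases is at hand. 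Producing a workable fibrant model and verifying this reconstitution is the main obstacle; with Theorems \ref{t2.1.3} and \ref{t2.2.2} and the properties of the functors in Figure \ref{functors} in hand, everything else is formal.
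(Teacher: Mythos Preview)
The paper does not give its own proof of this theorem: it is stated in the preliminaries as a result quoted verbatim from \cite{RR22} (Proposition~7.3.3 and Theorem~8.2.1 there), so there is no in-paper argument to compare your proposal against.

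As an outline, your formal reductions are sound. The observation that $\F[-]$ sends monomorphisms to injections, together with the identifications $\mathcal{N}_*(\F[X])\cong C_*(X;\F)$, $\mathbb{\Omega}(\F[X])\simeq\Lambda(X)$, and the corresponding statement for $\widehat{\mathbb{\Omega}}$, does yield the Quillen adjunctions and the reflection of weak equivalences. Your reduction of the derived unit to the behaviour of the counit $\epsilon_C$ on a fibrant replacement $C$ of $\F[X]$, via the triangle identity and two-out-of-three, is also correct.

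The gap is exactly where you flag it: you do not prove that $\epsilon_C$ is a weak equivalence, and your proposed route---constructing an explicit fibrant replacement in $\sCC^0_\F$ and analysing its $\F$-points---is not how \cite{RR22} proceeds, and would be difficult to carry out since fibrant objects in these model structures are not readily describable. The argument in \cite{RR22} instead isolates an abstract criterion (their Proposition~6.3.3, invoked again in the present paper's proof of Proposition~\ref{p5.0.3}) under which a Quillen adjunction with isomorphic underived unit has weakly equivalent derived unit; the verification of that criterion for $(\F[-],\mathcal{P})$ over an algebraically closed field is where the structural fact you mention---that simple cocommutative $\F$-coalgebras are one-dimensional, hence every cocommutative $\F$-coalgebra is spanned by its coradical filtration over group-likes---enters. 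So your diagnosis of the crux is accurate, but the mechanism you sketch for resolving it is not the one actually used, and as written your proposal stops short of a proof.
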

There are several ways to deem a Quillen adjunction as \emph{homotopically full and faithful}.
We deem a Quillen functor homotopically full and faithful if the derived unit map of the Quillen adjunction is is a weak equivalence for every cofibrant object.
So in this context the derived unit of $(\F[-], \mathcal{P})$ is in the class $\simeq_i$ for every cofibrant object. 

\section{Equivariant model category structures}
\begin{definition}
   Let $G$ be a group considered as a category with one object. The \textit{category of $G$-objects in $\sfC$}, denoted by $G$-$\sfC$, is defined as the category of functors from $G$ to $\sfC$ with natural transformations as morphisms.
\end{definition}
We review a criterion that guarantees the existence of an appropriate model structure on $G$-$\sfC$ given a sufficiently nice model structure on $\sfC$. 
We assume the group $G$ is a discrete simplicial group and $\sfC$ a cofibrantly generated model category, which is the case for $\sS_0$ or $\sCC_\F^0$ with any of the three model structures discussed. 
The desired model structure on $G$-$\sfC$ will be given in terms of fixed point objects.

\begin{definition}
    Given any subgroup $H$ of $G$, define the $H$-fixed point functor, denoted by \((-)^H\), as the composite \(G\text{-}\sfC \xrightarrow[]{\operatorname{restrict}} H\text{-}\sfC  \xrightarrow[]{\operatorname{lim}}  \sfC .\)
\end{definition}
We define the \emph{fixed point model structure} on $G$-$\sfC$, if it exists, as the model category structure determined by having weak equivalences (fibrations) be the maps $X \rightarrow Y$ such that, for every subgroup $H$ of $G$, the map $X^H \rightarrow Y^H$ is a weak equivalence (fibration) in $\sfC$.

Since model categories are by convention complete and cocomplete, any model category $\sfC$ may be thought as tensored and cotensored over the category of sets in the following way.
Let $X$ and $Y$ be objects of $\sfC$ and $S$ a set. 
Then there is a tensor and cotensor
\[S \otimes X = \coprod_S X \quad \text{and} \quad [S,Y] = \prod_S Y,\]
such that \(\operatorname{Hom}_\sfC( S\otimes X, Y ) \cong \operatorname{Hom}_{\s}(S, \operatorname{Hom}_\sfC(X,Y)) \cong \operatorname{Hom}_\sfC (X, [S,Y]).\)

Stephan's Proposition 2.6 in \cite{Ste16} provides a criteria for checking when the fixed point model structure exists based on the following so-called \emph{cellularity conditions}.

\begin{definition}\label{cellular}
    The $H$-fixed point functors $(-)^H: G\text{-}\sfC \rightarrow \sfC$ are said to be \textit{cellular}, or satisfy the \textit{cellularity conditions}, if for any subgroup $H$ of $G$ the following conditions hold: 
\begin{enumerate}
    \item the functor $(-)^H$ preserves pushouts of diagrams where one arrow is of the form 
    $$G/ K \otimes f : G/K \otimes X \rightarrow G/K \otimes Y,$$
    for some subgroup $K$ of $G$ and $f$ a generating cofibration of $\sfC$,
    \item the functor $(-)^H$ preserves filtered colimits of diagrams in $G$-$\sfC$,
    \item for any subgroup $K$ of $G$ and object $X$ in $\sfC$, the induced map 
    $$(G/H)^K \otimes X \rightarrow (G/H \otimes X)^K,$$
    is an isomorphism in $\sfC$.
\end{enumerate}
\end{definition}
We will make regular use of following theorems so we recall them here.
\begin{theorem}\label{t3.0.4}(Thereom 3.17 in \cite{Ste16}, Theorem 1.8, Chapter 3 in \cite{mandell2002})
    If $\sfC$ is a cofibrantly generated model category, with a generating set of cofibrations denoted $I$ and a generating set of acyclic cofibrations dentored $J$, such that $G$-$\sfC$ has cellular fixed point functors, then $G$-$\sfC$ is a cofibrantly generated model category with generating cofibrations
    $$I_G = \{G/H \otimes i \}_{i \in I, H \leq G}, $$
    generating acyclic cofibrations
    $$J_G = \{G/H \otimes j\}_{j \in J, H \leq G},$$ 
    and weak equivalences are maps of $G$-objects which are weak equivalences in $\sfC$ after passage to $H$ fixed points for all $H \leq G$.
\end{theorem}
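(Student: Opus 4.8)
The plan is to obtain this model structure by \emph{transfer} along a free--forgetful adjunction, invoking the standard recognition theorem for cofibrantly generated model categories. First I would fix a set $\mathcal{S}$ of representatives for the conjugacy classes of subgroups of $G$ and equip $\prod_{H\in\mathcal{S}}\sfC$ with the product model structure, which is again cofibrantly generated with generators supported in a single coordinate. There is an adjunction
\[
F:\ \textstyle\prod_{H\in\mathcal{S}}\sfC\ \rightleftarrows\ G\text{-}\sfC\ :\ U,\qquad U(Z)=(Z^{H})_{H\in\mathcal{S}},\quad F\big((Y_{H})_{H}\big)=\textstyle\coprod_{H\in\mathcal{S}} G/H\otimes Y_{H},
\]
whose adjointness comes from the natural isomorphism $\operatorname{Hom}_{G\text{-}\sfC}(G/H\otimes Y,Z)\cong\operatorname{Hom}_{\sfC}(Y,Z^{H})$: a $G$-equivariant map out of the free orbit object $G/H\otimes Y$ is the same datum as a map $Y\to Z^{H}$ in $\sfC$. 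Under $F$ the generating cofibrations and generating acyclic cofibrations of $\prod_{H}\sfC$ are sent precisely to the proposed sets $I_{G}$ and $J_{G}$, and the weak equivalences (fibrations) transferred along $U$ are exactly the maps that are weak equivalences (fibrations) on $(-)^{H}$ for every $H\le G$.

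To invoke the transfer theorem I must verify: (i) the domains of $I_{G}$ and $J_{G}$ are small relative to the corresponding classes of relative cell complexes; and (ii) every relative $J_{G}$-cell complex is carried by $U$ to a componentwise weak equivalence, i.e.\ becomes a weak equivalence after $(-)^{K}$ for all $K\le G$. Point (i) follows from the cellularity conditions: condition (2) gives that $U$ preserves filtered colimits, and conditions (1)--(2) together give that $U$ carries relative $I_{G}$- and $J_{G}$-cell complexes to relative $I$- and $J$-cell complexes of $\sfC$ (a coproduct of cells is a transfinite composite of single-cell pushouts, each preserved by $(-)^{K}$ by (1), and the transfinite composites preserved by (2)); combining this with smallness of the domains of $I$ and $J$ in $\sfC$ and the adjunction $F\dashv U$ yields (i) by the routine argument.

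The heart of the matter is (ii), and this is where cellularity condition (3) enters. Given $K\le G$ and a relative $J_{G}$-cell complex $A\to B$, I would present it as a transfinite composite of pushouts of coproducts of maps $G/H\otimes j$ with $j\in J$; since $(-)^{K}$ commutes with $G/H\otimes(-)$ by condition (3) and with the pushouts and transfinite composites involved by conditions (1)--(2), it suffices to observe that
\[
\big(G/H\otimes j\big)^{K}\ \cong\ (G/H)^{K}\otimes j\ =\ \coprod_{(G/H)^{K}} j
\]
is a coproduct of copies of the acyclic cofibration $j$, hence an acyclic cofibration of $\sfC$. Consequently $(A\to B)^{K}$ is a transfinite composite of pushouts of coproducts of acyclic cofibrations of $\sfC$, hence an acyclic cofibration, in particular a weak equivalence; since $K$ is arbitrary, (ii) follows, and the transfer theorem then produces the claimed cofibrantly generated model structure on $G\text{-}\sfC$ with generating (acyclic) cofibrations $I_{G}$ (resp.\ $J_{G}$) and weak equivalences and fibrations detected on all fixed-point functors. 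The delicate point I expect to cost the most effort is the interaction of $(-)^{K}$ with the pushouts occurring inside a $J_{G}$-cell complex: condition (1) is phrased for generators in $I$ rather than $J$, so before running the colimit bookkeeping above one should first reduce to the situation in which each $j\in J$ is itself a relative $I$-cell complex (so that each $G/H\otimes j$ is a relative $I_{G}$-cell complex and conditions (1)--(2) genuinely apply to the pushouts along it).
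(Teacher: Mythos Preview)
The paper does not actually prove this theorem: it is stated as a result \emph{recalled} from the literature (Stephan \cite{Ste16} and Mandell--May), introduced by the phrase ``We will make regular use of following theorems so we recall them here,'' and no proof is given. So there is no ``paper's own proof'' to compare against.

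Your sketch is nonetheless essentially the argument that appears in the cited references: one transfers the model structure along the adjunction $F\dashv U$ you describe, using Kan's recognition theorem, and the three cellularity conditions are precisely what is needed to check smallness and the acyclicity of relative $J_G$-cell complexes. Your identification of the delicate point---that condition~(1) is stated only for $f\in I$, so one must first rewrite each $G/H\otimes j$ with $j\in J$ as a relative $I_G$-cell complex before the colimit bookkeeping goes through---is exactly right, and is the one subtlety in Stephan's argument. One small caution: an arbitrary $j\in J$ is only a \emph{retract} of a relative $I$-cell complex, not necessarily a relative $I$-cell complex itself; you should either assume (as Stephan effectively does) that the generating sets are chosen so that $J$ consists of relative $I$-cell complexes, or argue that the retract passes harmlessly through $(-)^K$.
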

We are also interested in the model category structure of the category of presheaves $\Og^{op}$-$\sfC$, where $\Og$ denotes the orbit category of the group $G.$
The category $\Og$ is the full subcategory of the category of $G$-sets with objects being orbits $G/H$ for $H$ a subgroup of $G$. Hence, $\Og^{op}$-$\sfC$ is the category of contravariant functors from $\Og$ to $\sfC$ with morphisms being natural transformations.

Recall Theorem 11.6.1 in \cite{hird}, namely that any category of diagrams in the cofibrantly generated model category $\mathsf{C}$ admits the projective model structure and is again cofibrantly generated.
Suppose that $\mathsf{C}$ has generating cofibration set $I$ and generating acyclic cofibration set $J$.
Then the category of orbit diagrams
$\Og^{op}$-$\mathsf{C}$ has generating cofibrations and generating cofibrations
$$I_{\Og^{op}}:=\{ \Og^{op}(G / H, -) \otimes i\}_{i \in I, H \leq G},$$
and generating acyclic cofibrations
$$J_{\Og^{op}}:=\{\Og^{op}(G / H, -) \otimes j\}_{j \in J, H \leq G} .$$
It is worth noting that, for a generating cofibration \(i: X \rightarrow Y\) in \(\mathsf{C}\), one can think of \(\Og^{op}(G / H, -) \otimes i\) as a natural transformation, from a functor defined by taking \(G/K\) to \((G/K)^H \otimes X\)  to another functor from to \(G/K\) to \((G/K)^H \otimes Y\). 
This natural transformation on components is a map from \((G/K)^H \otimes X\) to \((G/K)^H \otimes Y\). 

In this model structure, the weak equivalences and fibrations are the object-wise weak equivalences and fibrations.

\subsection{Equivariant Reduced Simplicial Sets}
We now deduce that the desired model structures exist on $G$-$\sS_0$ and $G$-$\sCC_\F^0$.
\begin{proposition}\label{p3.1.1}
For any of the three model structures in Theorem \ref{t2.1.3} on $\sS_0$, the corresponding fixed point model structure on $G$-$\sS_0$ exists. 
\end{proposition}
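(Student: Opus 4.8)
The plan is to apply Theorem \ref{t3.0.4} (Stephan's criterion, as recalled in the excerpt), which reduces the existence of the fixed point model structure on $G$-$\sS_0$ to verifying that the $H$-fixed point functors $(-)^H \colon G\text{-}\sS_0 \to \sS_0$ are cellular in the sense of Definition \ref{cellular}, for each of the three cofibrantly generated model structures provided by Theorem \ref{t2.1.3}. Since all three model structures on $\sS_0$ have the same underlying category and the cellularity conditions only involve colimits, the tensoring over $\s$, and the generating cofibrations, it suffices to check the conditions once, using an explicit description of the generating cofibrations; in fact for conditions (2) and (3) the model structure is irrelevant entirely.

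First I would record the relevant (co)limits and tensors in $G$-$\sS_0$ and $\sS_0$. Colimits in $\sS_0$ are computed ``levelwise up to reduction'': coproducts and pushouts in $\sS$ are levelwise, and one collapses the resulting set of vertices to a point (equivalently, $\sS_0 \hookrightarrow \sS$ is reflective, and connected colimits such as pushouts along monomorphisms are already reduced). Colimits in $G\text{-}\sS_0$ are computed in $\sS_0$ with the induced $G$-action. The fixed point functor $(-)^H$ is a limit, hence a right adjoint, but its interaction with the specific pushouts in condition (1) and with filtered colimits must be checked by hand. The key structural fact is that for a $G$-set $S$ and $X \in \sS_0$, $(S \otimes X)^H = \coprod_S X$ with $H$ acting diagonally, and taking $H$-fixed points of such a coproduct of reduced simplicial sets: since each $X$ is reduced and $H$ permutes the copies, $(\coprod_{s \in S} X)^H$ decomposes over the $H$-orbits of $S$, and the fixed points of a finite (or arbitrary) $H$-orbit's worth of copies of $X$ is again a coproduct of copies of $X$ indexed by $S^H$, after reduction. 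This is precisely what makes condition (3) hold: $(G/H)^K \otimes X \to (G/H \otimes X)^K$ is the canonical comparison, which one checks is a bijection levelwise (the $K$-fixed $n$-simplices of $\coprod_{G/H} X_n$ with diagonal action are $\coprod_{(G/H)^K} X_n$, plus a single shared basepoint), hence an isomorphism in $\sS_0$.

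For condition (2), I would observe that filtered colimits in both $G\text{-}\sS_0$ and $\sS_0$ are computed levelwise in $\s$ (the reduction is automatic for filtered colimits of reduced simplicial sets with connected transition maps, or is a left adjoint that commutes with all colimits), and that $(-)^H = \lim_H(-)$ computed levelwise is a finite-type limit over the one-object category $H$, i.e. an equalizer of products indexed by elements of $H$; such limits commute with filtered colimits in $\s$, hence levelwise, hence in $\sS_0$. For condition (1), the main work is to analyze a pushout in $G\text{-}\sS_0$ of the form $Z \leftarrow G/K \otimes X \to G/K \otimes Y$ along $G/K \otimes f$ with $f$ a monomorphism (a generating cofibration in any of the three structures), apply $(-)^H$, and compare with the pushout of $Z^H \leftarrow (G/K \otimes X)^H \to (G/K \otimes Y)^H$. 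Using the orbit decomposition $G/K = \coprod_{[g] \in H\backslash G/K} H/(H \cap gKg^{-1})$ and the fact that $(-)^H$ applied to an induced cell $H/L \otimes f$ yields $f$ itself if $L$ acts... — more precisely, since $f$ is a levelwise injection and pushouts along levelwise injections in $\sS$ are levelwise and reduce nicely, one checks levelwise in $\s$ that $(-)^H$ (a levelwise equalizer/limit) preserves the pushout; here one uses that pushout of sets along an injection is a disjoint union, fixed points distribute over disjoint unions, and the $H$-action on the pushed-out cells is free on the relevant orbit pieces so no cancellation occurs. I expect condition (1) to be the main obstacle, precisely because pushouts are not generally preserved by the limit-type functor $(-)^H$; the saving grace is that $f$ is a monomorphism, so the pushout is computed as a union/adjunction of cells levelwise in $\s$, where fixed points do commute with such pushouts. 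Once all three cellularity conditions are verified uniformly, Theorem \ref{t3.0.4} immediately yields the fixed point model structure on $G\text{-}\sS_0$ for each of the three model structures on $\sS_0$, completing the proof.
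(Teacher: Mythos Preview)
Your approach is the same as the paper's: both reduce to Theorem \ref{t3.0.4} by verifying the cellularity conditions of Definition \ref{cellular}. The paper does not verify these by hand at all; it simply invokes Example 2.14 of \cite{Ste16}, which establishes cellularity for any presheaf-type category equipped with a cofibrantly generated model structure whose generating cofibrations are monomorphisms, and observes that all three model structures from Theorem \ref{t2.1.3} fit this description. So your explicit verification is not needed, though it is a reasonable way to proceed.

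There is one genuine gap in your verification of condition (2). You assert that $(-)^H$ is ``a finite-type limit over the one-object category $H$, i.e.\ an equalizer of products indexed by elements of $H$; such limits commute with filtered colimits in $\s$.'' For infinite $H$ this is false: an infinite product does not commute with filtered colimits, and indeed $(-)^H$ need not preserve arbitrary filtered colimits of $H$-sets. For instance, if $H=\bigoplus_{\mathbb{N}}\mathbb{Z}/2$ and $H_n=\bigoplus_{i\leq n}\mathbb{Z}/2$, then the directed system $H/H_n$ has empty $H$-fixed points at every stage but colimit a single point. What actually makes condition (2) go through (and what Stephan's argument uses) is the weaker requirement that $(-)^H$ preserve the \emph{transfinite compositions of cofibrations} arising in the small object argument; since the generating cofibrations here are monomorphisms, all maps in such a transfinite composition are levelwise injections, and $(-)^H$ does commute with directed colimits along injections (an $H$-fixed element of a union is already $H$-fixed at the stage where it first appears). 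Your arguments for conditions (1) and (3), which rely precisely on the generating cofibrations being monomorphisms, are on the right track and in line with how Stephan's example proceeds.
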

\begin{proof}
The cellularity conditions follow by Example 2.14 in \cite{Ste16}, so Theorem \ref{t3.0.4} applies.
\end{proof}
This proposition is  item (1) in Theorem \ref{one} and is an example (Example 1.1 and Example 2.14) provided in \cite{Ste16}.
These examples explain that the cellularity conditions hold for any presheaf category with a cofibrantly generated model structure having generating cofibrations being monomorphisms.
We denote $G$-$\sS_0$ with fixed point model structure as $(G$-$\sS_0, \text{\emph{fixpt}}(\simeq_i))$, where $\simeq_i$ is the notion of weak equivalence $\sS_0$ is equipped with. 

\begin{proposition}\label{p3.1.2}For any of the three model structures in Theorem \ref{t2.1.3} on $\sS_0$, 
the corresponding projective model structure on $\Og^{op}$-$\sS_0$ exists.
\end{proposition}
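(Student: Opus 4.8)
The plan is to invoke the general existence theorem for projective model structures on diagram categories, recalled just before the statement (Theorem 11.6.1 in \cite{hird}), and to check that its hypotheses are met in each of the three cases. First I would record that, by Theorem \ref{t2.1.3}, each of the model structures $(\sS_0,\simeq_i)$ for $i=1,2,3$ is combinatorial, hence in particular cofibrantly generated; write $I_i$ and $J_i$ for a chosen set of generating cofibrations and generating acyclic cofibrations of $(\sS_0,\simeq_i)$. Second, I would observe that $\Og$ is (essentially) small: its objects are the orbits $G/H$ indexed by subgroups $H\leq G$ and its morphisms are the $G$-equivariant maps between them, so the isomorphism classes of objects form a set and $\Og$ may be replaced by a small skeleton without changing $\Og^{op}\text{-}\sS_0$ up to equivalence. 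Thus $\Og^{op}\text{-}\sS_0$ is a category of diagrams indexed by a small category with values in a cofibrantly generated model category.

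With these two facts in place, the cited theorem applies directly: $\Og^{op}\text{-}\sS_0$ carries the projective model structure, in which a natural transformation is a weak equivalence (respectively fibration) exactly when it is objectwise a weak equivalence (respectively fibration) in $(\sS_0,\simeq_i)$, and this model structure is itself cofibrantly generated, with generating cofibrations and generating acyclic cofibrations the sets $I_{\Og^{op}}$ and $J_{\Og^{op}}$ recalled above, obtained by tensoring the representable presheaves $\Og^{op}(G/H,-)$ with the elements of $I_i$, respectively $J_i$. This already yields the existence claim for all three values of $i$.

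If one wants the projective structure to be combinatorial and left proper — which is convenient for the subsequent comparison in Section 4 with the fixed point model structure via Elmendorf's theorem — I would add two routine remarks. Local presentability is inherited by diagram categories over a small index, so $\Og^{op}\text{-}\sS_0$ is locally presentable and hence the projective structure is combinatorial; and left properness passes to the projective structure, since pushouts, cofibrations and weak equivalences in it are all detected objectwise and each $(\sS_0,\simeq_i)$ is left proper by Theorem \ref{t2.1.3}.

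I do not anticipate a genuine obstacle here: the argument is a direct citation once the inputs are assembled. The only point deserving a word of care is the set-theoretic smallness of $\Og$ when $G$ is an arbitrary (possibly large) group, which is handled by passing to a small skeleton as above; everything else is a formal consequence of cofibrant generation of $(\sS_0,\simeq_i)$ together with the diagram-category theorem.
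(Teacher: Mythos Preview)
Your proposal is correct and takes essentially the same approach as the paper: both reduce the proposition to a standard existence theorem for projective model structures on diagram categories, using that each $(\sS_0,\simeq_i)$ is combinatorial (and left proper). The only cosmetic difference is the citation---the paper invokes Proposition~A.2.8.2 of \cite{Lur09} rather than Theorem~11.6.1 of \cite{hird}---but since the paper itself recalled Hirschhorn's theorem just before the statement, your choice is entirely natural and yields the same conclusion.
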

\begin{proof}
Note that this proposition follows from Proposition A.2.8.2 in \cite{Lur09}, since the model category structures $(\sS_0, \simeq_i)$ constructed in \cite{RR22} are left proper and combinatorial. 
\end{proof}
Again, we denote $\Og^{op}$-$\sS_0$ with projective model structure as 
$(\Og^{op}$-$\sS_0$,\text{\emph{proj}}$(\simeq_i))$, where $\simeq_i$ is the notion of weak equivalence $\sS_0$ is equipped with, and will refer to this as ``the presheaf category.''  
We conclude this subsection by proving a version Elmendorf's theorem.

\begin{theorem}\label{t3.1.3}
    There is a Quillen equivalence of model categories 
    $$ \Theta: (\Og^{op} \text{-} \sS_0, \text{proj}(\simeq_i))\rightleftarrows  (G\text{-} \sS_0, \text{fixpt}(\simeq_i)) : \Phi,$$
    where $\Theta(X) = X(G/e)$ and $\Phi(Y) = \underline{Y}$ with $\underline{Y}$ being the functor that takes $G/H$ to the reduced simplicial set $Y^H$.
\end{theorem}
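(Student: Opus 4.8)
The plan is to run the classical argument behind Elmendorf's theorem (cf.\ \cite{Elm83,pia91}, organized through the cellularity formalism of \cite{Ste16}); the only non-formal input is that the fixed point functors on $\sS_0$ are cellular, which is precisely what the proof of Proposition \ref{p3.1.1} supplies. First I would produce the adjunction $\Theta\dashv\Phi$. Identify $G\text{-}\sS_0$ with the functor category $\mathrm{Fun}(BG,\sS_0)$, where $BG$ is the one-object groupoid on $G$, and note that $BG$ is equivalent to the full subcategory of $\Og$ on the single object $G/e$, since $\mathrm{Aut}_{\Og}(G/e)\cong G$. Under this identification $\Theta$ becomes restriction along $BG\hookrightarrow\Og$, i.e.\ evaluation at $G/e$ together with its canonical $G$-action; hence $\Theta$ has a right adjoint given by right Kan extension along this inclusion, and the pointwise formula for that Kan extension at $G/K$ is a limit of $Y$ over the relevant comma category, which computes the $K$-fixed point object $Y^K$. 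Thus the right adjoint is $\Phi$, with $\Phi(Y)(G/H)=Y^H$. That $\Theta$ lands in $G\text{-}\sS_0$ and $\Phi$ in $\Og^{op}\text{-}\sS_0$ is immediate, using $(Y^H)_0=(Y_0)^H=\ast$ when $Y$ is reduced.

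Next I would verify that $(\Theta,\Phi)$ is a Quillen adjunction by showing $\Phi$ is right Quillen. Both model structures are available by Propositions \ref{p3.1.1} and \ref{p3.1.2}. In the fixed point structure the fibrations, resp.\ acyclic fibrations, are the maps $f$ with every $f^H$ a fibration, resp.\ acyclic fibration, in $(\sS_0,\simeq_i)$, and in the projective structure they are the objectwise ones; since $\Phi(f)(G/H)=f^H$, the functor $\Phi$ preserves fibrations and acyclic fibrations. The same observation yields the stronger fact that $\Phi$ preserves \emph{and} reflects all weak equivalences, as $f$ is a weak equivalence in the fixed point structure precisely when $f^H\in\simeq_i$ for all $H\le G$, which is precisely when $\Phi(f)$ is an objectwise weak equivalence.

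To upgrade this to a Quillen equivalence, the key claim is that the unit $\eta_X\colon X\to\Phi\Theta X$ is an isomorphism for every cofibrant $X$. For a generating cell $\Og^{op}(G/K,-)\otimes A$ with $A\in\sS_0$ one has $\Theta(\Og^{op}(G/K,-)\otimes A)=G/K\otimes A$ in $G\text{-}\sS_0$, so evaluating $\eta$ at $G/H$ gives the canonical map $(G/K)^H\otimes A\to(G/K\otimes A)^H$, which is an isomorphism by cellularity condition (3) of Definition \ref{cellular}; the unit is trivially an isomorphism at the initial object, the base case of the induction. Now a cofibrant $X$ is a retract of a transfinite composite of pushouts of maps $\Og^{op}(G/K,-)\otimes i$ with $i$ a generating cofibration of $\sS_0$; since $\Theta$ is a left adjoint it preserves all colimits and turns these cells into maps of the form $G/K\otimes i$, while by cellularity conditions (1) and (2) each $(-)^H$, hence $\Phi$, preserves pushouts along maps $G/K\otimes i$ and transfinite composites. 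Therefore $\Phi\Theta$ commutes with the colimits occurring in any such presentation, and induction along the presentation shows $\eta_X$ is an isomorphism for cellular $X$, hence for every cofibrant $X$ by the retract argument. Granting this, for cofibrant $X$ the derived unit factors as $X\xrightarrow{\ \eta_X,\ \cong\ }\Phi\Theta X\to\Phi\big((\Theta X)^{\mathrm{fib}}\big)$, the second map being $\Phi$ of a weak equivalence and hence a weak equivalence by the previous paragraph; together with the fact that $\Phi$ reflects weak equivalences between fibrant objects, the standard recognition criterion for Quillen equivalences (see, e.g., \cite{hird}) gives the claim. The argument is uniform in $i\in\{1,2,3\}$, using only the cellularity conditions and formal facts.

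The step I expect to require the most care is the inductive argument that $\eta_X$ is an isomorphism on cofibrant objects: one must match the colimits appearing in a cofibrant presentation in $\Og^{op}\text{-}\sS_0$ with exactly those that $\Phi$ preserves under cellularity conditions (1) and (2) — this is why $\Theta$ must carry the generating cells to maps of the precise form $G/K\otimes i$ — and check that the cell-level isomorphism is compatible with the pushouts. Everything else is either formal or already contained in the results recalled in this section.
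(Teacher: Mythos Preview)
Your proposal is correct and follows essentially the same route as the paper: both arguments show $\Phi$ is right Quillen directly from the definitions of the two model structures, establish that the unit $\eta_X\colon X\to\Phi\Theta X$ is an isomorphism on cofibrant $X$, and deduce the Quillen equivalence from this. The only difference is that the paper cites \cite[Theorem~2.10]{Ste16} for the unit isomorphism and then verifies the Quillen equivalence condition by an explicit factorization plus 2-out-of-3, whereas you unpack the cellular induction behind that citation and invoke the equivalent derived-unit/reflection criterion instead.
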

\begin{proof}
We know that the fixed point functors $(-)^H$ preserve weak equivalences and fibrations by the definition of the model structure on $G$-$\sS_0$, so correspondingly $\Phi$ will also preserve weak equivalences and fibrations. 

To show the desired Quillen equivalence, we want show that for every cofibrant object $X$ in $\Og^{op}$-$\sS_0$ and fibrant object $Y$ in $G$-$\sS_0$ that 
\(\alpha: \Theta(X) \rightarrow Y \) is a weak equivalence in \( G\text{-}\sS_0\),
if and only if \( \beta: X \rightarrow \Phi(Y) \)  is a weak equivalence in \(\Og^{op}\text{-}\sS_0\). 
Using the proof of Theorem 2.10 in \cite{Ste16}, we know that for any cofibrant $X$ in $\Og^{op}$-$\sS_0$ the unit of the adjunction \((\Theta, \Phi)\), \(\eta: X \rightarrow \Phi(\Theta(X))\),
is an isomorphism. 

Given $\alpha:\Theta(X) \rightarrow Y $, we know that $\beta: X \rightarrow \Phi(Y)$ gives rise to the map $\beta^H= \beta(G/H),$
$$\beta(G/H): X(G/H) \rightarrow \Phi((Y)(G/H)) = Y^H,$$
and this map can be factored as follows 
$$X(G/H) \xrightarrow[]{\eta_{G/H}} \Phi(\Theta (X(G/H))) = X(G/e)^H \xrightarrow[]{\alpha^H} Y^H.$$
By the 2-out-of-3 axiom of being a model category, this shows that $\alpha$ is a weak equivalence if and only $\beta$ is a weak equivalence. 
\end{proof}

\subsection{Equivariant Connected Simplicial Coalgebras}  We have analogous statements to those of the previous section for equivariant simplicial cocommutative coalgebras. 

\begin{proposition}\label{p3.2.1}
For any of the three model structures on $\sCC_\F^0$, $G$-$\sCC_\F^0$ admits the $G$-model structure 
admits the fixed point model structure.

\end{proposition}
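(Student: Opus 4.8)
The plan is to verify the cellularity conditions of Definition \ref{cellular} for the $H$-fixed point functors $(-)^H \colon G\text{-}\sCC_\F^0 \to \sCC_\F^0$ and then invoke Theorem \ref{t3.0.4}. The key observation is that $\sCC_\F^0$, like $\sS_0$, is a presheaf-type category over $\F$-modules whose generating cofibrations are injections, so the situation is formally parallel to Proposition \ref{p3.1.1}; the difference is that colimits in $\sCC_\F^0$ are computed not degreewise in $\s$ but in the category of cocommutative $\F$-coalgebras, where filtered colimits and certain pushouts are still computed on underlying $\F$-modules but products (hence cotensors) are not. So I cannot just cite Example 2.14 of \cite{Ste16} verbatim; I need to check the three conditions by hand using the structure of $\sCC_\F^0$.

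First I would record that the tensor $G/K \otimes (-)$ in $\sCC_\F^0$ is the coproduct $\bigoplus_{G/K}(-)$, which on underlying graded $\F$-modules is the direct sum; this is the coproduct in $\CoCo_\F$ because a direct sum of cocommutative coalgebras is again one, with componentwise coproduct. For condition (2), filtered colimits in $\sCC_\F^0$ are created by the forgetful functor to simplicial $\F$-modules (filtered colimits commute with the finite tensor products appearing in the coalgebra axioms and with cocommutativity), and $(-)^H = \lim$ over the finite group $H$ (really $\lim$ over $BH$) commutes with filtered colimits of $\F$-modules since $H$ is discrete and the relevant limit is a finite one in each simplicial degree; hence $(-)^H$ preserves filtered colimits. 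For condition (3), the map $(G/H)^K \otimes C \to (G/H \otimes C)^K$ is, degreewise, the comparison $\bigoplus_{(G/H)^K} C_n \to (\bigoplus_{G/H} C_n)^K$, and since $K$ acts on $G/H$ by permuting the basis and the direct sum over a $K$-set decomposes as the direct sum over orbits, the $K$-fixed points of $\bigoplus_{G/H} C_n$ are exactly $\bigoplus_{(G/H)^K} C_n$ (a trivial orbit contributes its summand, a nontrivial orbit contributes nothing to the limit since the fixed points of a nontrivial permutation representation landing in a direct sum are trivial when the orbit has size $>1$ — here one must be a little careful: the limit over $BK$ of a $K$-permuted direct sum is the direct sum over the fixed points, which is the standard fact and holds over any ring). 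So condition (3) is an isomorphism.

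The main obstacle is condition (1): showing $(-)^H$ preserves pushouts along maps of the form $G/K \otimes j$ with $j$ a generating cofibration of $\sCC_\F^0$, i.e. an injection of connected simplicial cocommutative $\F$-coalgebras. The subtlety is that pushouts in $\sCC_\F^0$ are not computed on underlying $\F$-modules in general (the coproduct of coalgebras is the tensor product of underlying modules, not the direct sum). However, a pushout of $G/K \otimes j \colon \bigoplus_{G/K} C \hookrightarrow \bigoplus_{G/K} D$ along an arbitrary map $\bigoplus_{G/K} C \to A$ is a pushout along an \emph{injection}, and the relevant structural fact from \cite{RR22} (used there to run the small object argument for $\sCC_\F^0$) is that such pushouts of injections of connected cocommutative coalgebras \emph{are} computed on underlying simplicial $\F$-modules. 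Granting that, the pushout in question is degreewise a pushout of injections of $\F$-modules, i.e. $A_n \oplus_{(\bigoplus_{G/K}C)_n} (\bigoplus_{G/K}D)_n \cong A_n \oplus (\bigoplus_{G/K}(D/C))_n$ as $K$-equivariant $\F$-modules (the cokernel of an injection in $\CoCo_\F$ being taken in $\F$-modules here), and then $(-)^H = \lim_{BH}$ commutes with this finite colimit of $\F$-modules and with the direct sum over $G/K$ as in condition (3). I would therefore structure the proof as: (i) cite the "pushouts of injections are underlying-module pushouts" lemma from \cite{RR22}; (ii) reduce all three conditions to the corresponding statements for simplicial $\F$-modules with $K$- and $H$-actions; (iii) invoke the fact that taking $BH$-limits of (finite) permutation representations over a field distributes over direct sums and commutes with filtered colimits and finite pushouts; (iv) apply Theorem \ref{t3.0.4}. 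If the cited underlying-module lemma is not literally available in the needed generality, the fallback is to note that the generating cofibrations of $\sCC_\F^0$ can be taken with \emph{cofree} target or can be analyzed degreewise so that the pushout is still computed on $\F$-modules; but I expect step (i) to go through directly from \cite{RR22}.
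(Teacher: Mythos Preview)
Your overall strategy---verify Stephan's cellularity conditions (Definition~\ref{cellular}) and invoke Theorem~\ref{t3.0.4}---matches the paper's. The paper's argument is a one-line citation: it asserts that the same reasoning as in Proposition~\ref{p3.1.1} (namely Example~2.14 of \cite{Ste16}) applies. You are right to be cautious about invoking that example verbatim, since $\sCC_\F^0$ is not literally a $\mathsf{Set}$-valued presheaf category, and your plan to check the three conditions directly is a reasonable, more careful route.

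There is, however, a genuine error in your analysis of condition~(1). You write that ``pushouts in $\sCC_\F^0$ are not computed on underlying $\F$-modules in general (the coproduct of coalgebras is the tensor product of underlying modules, not the direct sum).'' This is backwards. In $\CoCo_\F$ the categorical \emph{product} is the tensor product, while the \emph{coproduct} is the ordinary direct sum of vector spaces with block-diagonal comultiplication. More to the point, the forgetful functor $\CoCo_\F \to \F\text{-}\mathsf{Mod}$ admits a right adjoint (the cofree cocommutative coalgebra), so it is a \emph{left} adjoint and preserves \emph{all} colimits, pushouts included. No special ``pushouts of injections are underlying-module pushouts'' lemma from \cite{RR22} is needed; every pushout in $\sCC_\F^0$ is already computed degreewise on underlying $\F$-modules, and your elaborate workaround collapses. (You in fact say something close to this correctly in your first paragraph and then contradict it later.)

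A subtler issue affects your condition~(3) argument. The ``standard fact'' that the $K$-fixed points of a $K$-permuted direct sum are the direct sum over the fixed indices is true in $\mathsf{Set}$ but \emph{false} in $\F$-modules: the vector-space fixed points of $\bigoplus_{G/H} C_n$ contain a diagonal copy of $C_n$ for every $K$-orbit, not only for the singleton orbits. The conclusion you need is nevertheless correct in $\CoCo_\F$, but for a different reason: the fixed-point functor $(-)^K$ is a limit computed \emph{in} $\CoCo_\F$ (the largest subcoalgebra contained in the vector-space fixed points), and the diagonal coming from a nontrivial orbit is \emph{not} a subcoalgebra of the direct-sum coalgebra, hence contributes zero. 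Your argument should make this distinction explicit rather than appeal to a module-level statement that does not hold.
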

This proposition is item (2) in Theorem \ref{one} and holds for the same reasoning given for $G$-$\sS_0$ in Proposition \ref{p3.1.1}.
As before, we are also interested in the model structure on the presheaf category. 
\begin{proposition}\label{p3.2.2}
For any of the three model structures on $\sCC_\F^0$,
$\Og^{op}$-$\sCC_\F^0$ admits the projective model structure.
\end{proposition}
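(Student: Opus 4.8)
The plan is to mimic exactly the argument already used for $G$-$\sS_0$ and $\Og^{op}$-$\sS_0$. The key observation is that Proposition~\ref{p3.2.2} is a statement about a diagram category over a fixed small index category, namely $\Og^{op}$, with values in the model category $(\sCC_\F^0, \simeq_i')$, for each of the three choices $i\in\{1,2,3\}$. So I would invoke the same general existence result for projective model structures on diagram categories that was used in Proposition~\ref{p3.1.2}: by Theorem~\ref{t2.2.2}, each of the model structures $(\sCC_\F^0, {\mathbb{\Omega}}\text{-q.i.})$, $(\sCC_\F^0, \widehat{\mathbb{\Omega}}\text{-q.i.})$, and $(\sCC_\F^0, \text{q.i.})$ is left proper and combinatorial; by Proposition~A.2.8.2 in \cite{Lur09} (equivalently Theorem~11.6.1 in \cite{hird}), the projective model structure on the functor category $\mathrm{Fun}(\Og^{op},\sCC_\F^0) = \Og^{op}$-$\sCC_\F^0$ then exists and is again combinatorial, with object-wise weak equivalences and fibrations. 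That is the whole proof.

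In more detail, the first step is simply to recall that $\Og$ is a small category (its objects are the orbits $G/H$ for $H\leq G$, a set once we fix a cardinality bound, and each hom-set is a set), so $\Og^{op}$-$\sCC_\F^0$ is an honest diagram category. The second step is to cite Theorem~\ref{t2.2.2} for the combinatoriality and left properness of the base model structures on $\sCC_\F^0$ — these are the hypotheses needed. The third step is to apply the diagram-category existence theorem verbatim, exactly as in the proof of Proposition~\ref{p3.1.2}; one gets generating (acyclic) cofibrations of the form $\Og^{op}(G/H,-)\otimes i$ (respectively $\Og^{op}(G/H,-)\otimes j$) where $i$ ranges over generating cofibrations and $j$ over generating acyclic cofibrations of $\sCC_\F^0$, as spelled out in the discussion of $I_{\Og^{op}}$ and $J_{\Og^{op}}$ above. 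No new input beyond what is already in the excerpt is required.

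I do not expect any genuine obstacle here; the statement is a direct application of a black-box theorem, identical in structure to Proposition~\ref{p3.1.2}. The only point that deserves a word of care — and this is more bookkeeping than mathematics — is that one should make sure the coalgebra model structures of \cite{RR22} really are \emph{combinatorial} (not merely cofibrantly generated with a proper class of generators), since Proposition~A.2.8.2 in \cite{Lur09} is stated for combinatorial model categories; but Theorem~\ref{t2.2.2} already asserts combinatoriality, so this is settled. Alternatively, if one prefers to avoid combinatoriality hypotheses, Theorem~11.6.1 in \cite{hird} only requires cofibrant generation, which also holds by Theorem~\ref{t2.2.2}, so either route closes the argument. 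Thus the proof reads:

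\begin{proof}
This follows from Proposition~A.2.8.2 in \cite{Lur09} (or Theorem~11.6.1 in \cite{hird}), since by Theorem~\ref{t2.2.2} each of the three model category structures $(\sCC_\F^0, \simeq_i')$ is left proper and combinatorial, and $\Og^{op}$ is a small category.
\end{proof}
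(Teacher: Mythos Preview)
Your proposal is correct and matches the paper's own argument essentially verbatim: the paper also invokes Proposition~A.2.8.2 in \cite{Lur09}, citing \cite{RR22} (specifically Theorem~7.3.1 there, which is what your Theorem~\ref{t2.2.2} records) for the fact that the three model structures on $\sCC_\F^0$ are left proper and combinatorial.
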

It was shown in Theorem  7.3.1 in \cite{RR22} that the model structures $(\sCC_\F^0, \simeq_i')$ are left proper and combinatorial, so the above statement follows from Proposition A.2.8.2 in \cite{Lur09}. Lastly, we have the following result comparing the two model structures,  whose proof can be adapted from the proof of Theorem \ref{t3.1.3}.

\begin{theorem}\label{t3.2.3}
    There is a Quillen equivalence of model categories 
    $$ \Theta: (\Og^{op} \text{-} \sCC_\F^0,  \text{proj}(\simeq_i') )  \rightleftarrows  (G\text{-} \sCC_\F^0,  \text{fixpt}(\simeq_i') )  : \Phi,$$
    where $\Phi(A) = A(G/e)$ and $\Theta(B) = \underline{B}$ with $\underline{B}$ being the functor that takes $G/H$ to the connected simplicial cocommutative coalgebra $B^H$.
\end{theorem}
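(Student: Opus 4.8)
The plan is to mimic, nearly verbatim, the proof of Theorem~\ref{t3.1.3}, replacing $\sS_0$ by $\sCC_\F^0$ throughout and checking that the only ingredients actually used there --- cellularity of the fixed point functors, the existence of both model structures, and the fact that the fixed point functors detect weak equivalences and fibrations by construction --- are all available in the coalgebra setting. Concretely: Proposition~\ref{p3.2.1} supplies the fixpt model structure on $G$-$\sCC_\F^0$, Proposition~\ref{p3.2.2} supplies the projective model structure on $\Og^{op}$-$\sCC_\F^0$, and Proposition~2.6 together with Theorem~2.10 of \cite{Ste16} --- whose hypotheses are exactly the cellularity conditions of Definition~\ref{cellular}, already verified via Proposition~\ref{p3.2.1} --- give the Quillen adjunction $(\Theta,\Phi)$ together with the crucial fact that the unit $\eta\colon B \to \Phi(\Theta(B))$ is an isomorphism for every cofibrant $B$ in $\Og^{op}$-$\sCC_\F^0$.

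First I would record that $\Phi$ preserves weak equivalences and fibrations: this is immediate, since the weak equivalences and fibrations of the fixpt model structure on $G$-$\sCC_\F^0$ are \emph{defined} to be the maps that become weak equivalences, resp.\ fibrations, in $(\sCC_\F^0,\simeq_i')$ after applying each $(-)^H$, and $\Phi(A)(G/H) = A^H$ by construction. Hence $\Phi$ is a right Quillen functor provided $(\Theta,\Phi)$ is an adjunction, which follows from the abstract Elmendorf setup in \cite{Ste16} applied to the cofibrantly generated model category $\sCC_\F^0$ and its $G$-objects; here one uses that $\Og^{op}$-$\sCC_\F^0$ carries the projective model structure with generating (acyclic) cofibrations $\{\Og^{op}(G/H,-)\otimes i\}$, $\{\Og^{op}(G/H,-)\otimes j\}$, matching the generating sets $I_G$, $J_G$ on $G$-$\sCC_\F^0$ under the adjunction.

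Next I would verify the Quillen equivalence via the standard criterion: for every cofibrant $B$ in $\Og^{op}$-$\sCC_\F^0$ and fibrant $A$ in $G$-$\sCC_\F^0$, a map $\alpha\colon \Theta(B) \to A$ is a weak equivalence iff its adjunct $\beta\colon B \to \Phi(A)$ is. Since $\Og^{op}$-$\sCC_\F^0$ has object-wise weak equivalences, $\beta$ is a weak equivalence iff each $\beta(G/H)\colon B(G/H) \to A^H$ is a $\simeq_i'$-weak equivalence. Using that the unit $\eta_B$ is an isomorphism on cofibrant objects, $\beta(G/H)$ factors as
\[
B(G/H) \xrightarrow{\ \eta_B(G/H)\ } \Phi(\Theta(B))(G/H) = \Theta(B)^H \xrightarrow{\ \alpha^H\ } A^H,
\]
with the first map an isomorphism; so $\beta(G/H)$ is a weak equivalence iff $\alpha^H$ is, and the latter for all $H$ is precisely the condition that $\alpha$ is a weak equivalence in $(G$-$\sCC_\F^0,\mathrm{fixpt}(\simeq_i'))$. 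Two-out-of-three then closes the argument.

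The main obstacle, such as it is, is not conceptual but bookkeeping: one must make sure that the hypotheses of Stephan's Proposition~2.6 and Theorem~2.10 are genuinely met in $\sCC_\F^0$, i.e.\ that $\sCC_\F^0$ (with each $\simeq_i'$) is cofibrantly generated --- guaranteed by Theorem~\ref{t2.2.2} --- and that the cellularity conditions of Definition~\ref{cellular} hold there. The latter is exactly what Proposition~\ref{p3.2.1} asserts (by the same presheaf-category reasoning as Proposition~\ref{p3.1.1}, since $\sCC_\F^0$ is again a category of ``set-like'' objects with monomorphisms/injections as cofibrations), so no new work is required; one merely has to confirm that the tensoring $S \otimes C = \bigoplus_S C$ used in $\sCC_\F^0$ --- the coproduct of coalgebras --- interacts with the $H$-fixed point functor $(-)^H = \lim_H$ in the way condition (3) of Definition~\ref{cellular} demands, namely $(G/H)^K \otimes C \xrightarrow{\ \cong\ } (G/H \otimes C)^K$. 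This is the one place where the coalgebra structure (as opposed to plain simplicial sets) must be inspected, and it goes through because $(G/H)^K$ is again a (possibly empty) $G$-set, the relevant colimits and limits are computed degreewise on the underlying $\F$-modules where $\bigoplus$ and finite $\lim$ commute appropriately, and the comultiplication on a direct sum of coalgebras is the diagonal-compatible one. With that observation in hand the rest is a line-by-line transcription of the proof of Theorem~\ref{t3.1.3}.
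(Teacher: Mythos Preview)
Your proposal is correct and follows exactly the approach the paper indicates: the paper's proof is a one-line remark that the argument ``can be adapted from the proof of Theorem~\ref{t3.1.3}'', and you have carried out precisely that adaptation, invoking Propositions~\ref{p3.2.1} and~\ref{p3.2.2} for the existence of the two model structures and Stephan's Theorem~2.10 for the unit being an isomorphism on cofibrant diagrams. Note only that the statement of Theorem~\ref{t3.2.3} as printed swaps the names of $\Theta$ and $\Phi$ relative to Theorem~\ref{t3.1.3}; your proof (correctly) uses the convention of Theorem~\ref{t3.1.3}, with $\Theta$ the evaluation at $G/e$ and $\Phi$ the fixed-point diagram functor.
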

\section{Equivariant Chains Functors}

If $X \in G\text{-}\sS_0$ then we have an action $G \times X \rightarrow X$ and for $G$ a discrete simplicial group we have that $G_n \times X_n \rightarrow X_n$. 
Therefore, we get an induced action on $\F[X]$, which commutes with the coproduct.
Also, by the Propositions \ref{p3.1.1} and \ref{p3.1.2}, we know we have fixed point model structure on $G$-$\sS_0$ and $G$-$\sCC_\F^0$, so we define an equivariant version of the simplicial coalgebra of chains functor and its adjoint between these categories
$$\F_G[-]: G\text{-}\sS_0 \rightleftarrows G\text{-}\sCC_\F^0: \mathcal{P}_G,$$
to be the same as $\F[-]$ and $\mathcal{P}$, respectively, but now on $G$-objects, meaning that for $\F_G[X] :=\F[X]$ and $\mathcal{P}_G[X] :=\mathcal{P}[X].$
As to be expected the these functors preserve the $G$-action given $X \in G$-$\sS_0$ and  $C \in G$-$\sCC_\F^0$
$$\F_G[G \times X] = \F[G \times X] = G\times \F[X] = G \times \F_G[X],$$
$$\mathcal{P}_G(G \times C) = \mathcal{P}(G \times C) = G\times \mathcal{P}(C) = G\times \mathcal{P}_G(C).$$

As a consequence, $(\F_G[-], \mathcal{P}_G)$ will also form an adjunction.

If $\underline{X} \in \Og^{op}\text{-}\sS_0$, we define the functor 
$$\F_{\Og^{op}}[-]: \Og^{op}\text{-}\sS_0 \rightarrow \Og \text{-}\sCC_\F^0,$$
$$\underline{X} \mapsto \F_{\Og^{op}}[X],$$
where the new object in $\Og^{op}$-$\sCC_\F^0$ is the composite functor determined by $\underline{X}$ and $\F[-]$.
Correspondingly, given $\underline{A}\in \Og^{op}$-$\sCC_\F^0$ we can define the functor
$$\mathcal{P}_{\Og^{op}}: \Og^{op} \text{-}\sCC_\F^0 \rightarrow \Og^{op}\text{-}\sS_0,$$
$$\underline{A} \mapsto \mathcal{P}_{\Og^{op}}(A),$$
where again the new object in $\Og^{op}$-$\sS_0$ is the composite functor determined by $\underline{A}$ and $\mathcal{P}$.
As way to organize the functors and categories in question we present the diagram below: 
\begin{eqnarray}\label{commsquare}
    \begin{tikzcd} 
{(G\text{-}\sS_0, \text{\emph{fixpt}}(\simeq_i))}  \arrow[rrr, "{\F_G[-]}", shift left= 2] \arrow[dd, "\Phi",shift right=-2 ] &  &  &
{(G\text{-}\sCC_\F^0, \text{\emph{fixpt}}(\simeq_i))} \arrow[lll, "\mathcal{P}_{G}"]   \arrow[dd, "\Phi",shift right=-2] \\
& & &   \\
{(\Og^{op}\text{-}\sS_0, \text{\emph{proj}}(\simeq_i) )}\arrow[uu, "\Theta", shift left] \arrow[rrr, "{\F_{\Og^{op}}[-]}", shift left=2]                  &  &  & 
{(\Og^{op}\text{-}\sCC_\F^0, \text{\emph{proj}}(\simeq_i)).} \arrow[lll, "\mathcal{P}_{\Og^{op}}"] \arrow[uu, "\Theta", shift left]    
\end{tikzcd}
\end{eqnarray}
for the $\simeq_i$ in the previous sections and the Quillen equivalences in Theorems \ref{t3.1.3} and \ref{t3.2.3}.
Notice that the square that commutes on the nose is the following
$$\F_G[-] =\Theta \circ \F_{\Og^{op}}[-] \circ \Phi.$$

With this having been established, we can state the main result, namely item (3) in Theorem \ref{one}.
\begin{theorem}\label{t4.0.1}
For any field, $\F$, $\F_G[-]$ and $\F_{\Og^{op}}[-]$ are left Quillen functors. 
\end{theorem}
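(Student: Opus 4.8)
The plan is to deduce both statements from the non-equivariant Quillen adjunction $\F[-]\dashv\mathcal{P}$ of Theorem~\ref{t2.3.3}, by checking on the explicit generating (acyclic) cofibrations of the projective and fixed point model structures that $\F_{\Og^{op}}[-]$ and $\F_G[-]$ preserve cofibrations and acyclic cofibrations. The one elementary fact used throughout is that $\F[-]\colon\sS_0\to\sCC_\F^0$, being a left adjoint, preserves all colimits; in particular it commutes, naturally in the input, with the set-indexed tensoring $S\otimes(-)=\coprod_S(-)$, where the coproducts are the categorical ones in $\sS_0$ and $\sCC_\F^0$ (not the naive disjoint union or direct sum). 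We fix one of the three pairs of model structures of Theorems~\ref{t2.1.3} and~\ref{t2.2.2}; the argument is uniform.

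First I would treat $\F_{\Og^{op}}[-]$. Since $\F_{\Og^{op}}[-]$ and $\mathcal{P}_{\Og^{op}}$ are post-composition with $\F[-]$ and $\mathcal{P}$, the levelwise adjunction upgrades to $\F_{\Og^{op}}[-]\dashv\mathcal{P}_{\Og^{op}}$. By Proposition~\ref{p3.1.2} the projective model structure on $\Og^{op}\text{-}\sS_0$ is cofibrantly generated, with generating (acyclic) cofibrations the maps $\Og^{op}(G/H,-)\otimes i$, where $I$ and $J$ are generating sets of cofibrations and acyclic cofibrations of $\sS_0$ and $i$ ranges over $I$ (resp.\ $J$); these are the images of $i$ under the left adjoint $L_H$ of evaluation at $G/H$. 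As $\F_{\Og^{op}}[-]$ is a left adjoint it preserves colimits and retracts, so — every (acyclic) cofibration being a retract of a transfinite composite of pushouts of generating ones — it suffices to see that each $\F_{\Og^{op}}[\Og^{op}(G/H,-)\otimes i]$ is a (acyclic) cofibration in $\Og^{op}\text{-}\sCC_\F^0$. Evaluating at $G/K$ and using that $\F[-]$ preserves the coproduct $\Og^{op}(G/H,G/K)\otimes(-)$, this map is naturally identified with $\Og^{op}(G/H,-)\otimes\F[i]=L_H^{\sCC}(\F[i])$, the image of $\F[i]$ under the left adjoint of evaluation at $G/H$ on the coalgebra side. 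Now $\F[i]$ is a (acyclic) cofibration of $\sCC_\F^0$ by Theorem~\ref{t2.3.3}, and $L_H^{\sCC}$ is left Quillen because evaluation at $G/H$ preserves fibrations and weak equivalences in the projective structure; hence $L_H^{\sCC}(\F[i])$ is a (acyclic) cofibration, as needed. (This is also an instance of the general fact that a Quillen adjunction induces a Quillen adjunction on projective diagram model structures.)

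For $\F_G[-]$ the on-the-nose factorisation $\F_G[-]=\Theta\circ\F_{\Og^{op}}[-]\circ\Phi$ is of no direct help, since $\Phi$ is right Quillen rather than left Quillen, so I would argue directly in the same spirit. The adjunction $\F_G[-]\dashv\mathcal{P}_G$ again comes from $\F[-]\dashv\mathcal{P}$ by passage to $G$-objects, and by Theorem~\ref{t3.0.4} the fixed point model structure on $G\text{-}\sS_0$ is cofibrantly generated with generating (acyclic) cofibrations $G/H\otimes i$ for $i\in I$ (resp.\ $i\in J$). Since $\F[-]$ preserves the coproduct $\coprod_{G/H}(-)$ together with the $G$-action permuting its summands, $\F_G[G/H\otimes i]$ is naturally identified with $G/H\otimes\F[i]$ in $G\text{-}\sCC_\F^0$. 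The functor $G/H\otimes(-)\colon\sCC_\F^0\to G\text{-}\sCC_\F^0$ — which sends $C$ to $\mathrm{Ind}_H^G$ of $C$ equipped with the trivial $H$-action — is left adjoint to the $H$-fixed point functor $(-)^H$, and $(-)^H$ preserves fibrations and weak equivalences by the very definition of the fixed point model structure, hence is right Quillen; so $G/H\otimes(-)$ is left Quillen. As $\F[i]$ is a (acyclic) cofibration of $\sCC_\F^0$ (Theorem~\ref{t2.3.3}), it follows that $G/H\otimes\F[i]$ is a (acyclic) cofibration of $G\text{-}\sCC_\F^0$. Since $\F_G[-]$ is a left adjoint it preserves colimits and retracts, and every (acyclic) cofibration is a retract of a transfinite composite of pushouts of generating ones, so $\F_G[-]$ preserves all (acyclic) cofibrations and is left Quillen.

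The only point requiring genuine care — and the place where an error is easiest — is the identification, in each case, of $\F_{\Og^{op}}[-]$ (resp.\ $\F_G[-]$) on a generating cofibration with a free-diagram functor (resp.\ an induction functor) applied to $\F[i]$. This rests on $\F[-]$ commuting with the relevant categorical coproducts, which is precisely where its being a left adjoint is invoked, and on checking that the resulting isomorphisms are natural in $\Og^{op}$ (resp.\ $G$-equivariant). Once these identifications are in hand, the theorem follows formally from Theorem~\ref{t2.3.3}, Theorem~\ref{t3.0.4}, and the defining properties of the projective and fixed point model structures.
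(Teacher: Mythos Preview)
Your proof is correct and follows the same overall strategy as the paper: reduce to generating (acyclic) cofibrations and use the identification $\F_G[G/H\otimes i]\cong G/H\otimes\F[i]$ (and analogously for $\F_{\Og^{op}}[-]$). The one substantive difference is in how you conclude that $G/H\otimes\F[i]$ is a cofibration. The paper checks that $\F[i]$ lands in the specific generating set $I'$ of $\sCC_\F^0$ (using that $\F[-]$ preserves finiteness), so that $G/H\otimes\F[i]$ is itself a \emph{generating} cofibration of $G\text{-}\sCC_\F^0$. You instead invoke that $G/H\otimes(-)$ is left adjoint to $(-)^H$, hence left Quillen for the fixed point structure, so it sends any cofibration $\F[i]$ to a cofibration. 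Your route is slightly more conceptual and avoids the finiteness bookkeeping; the paper's route is more concrete and shows a bit more (generators go to generators). Your care about categorical coproducts in $\sS_0$ and $\sCC_\F^0$, and about why closure under retracts and transfinite composites of pushouts finishes the argument, is more precise than the paper's phrasing.
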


\begin{proof}
We show that $\F_G[-]$ and $\F_{\Og^{op}}[-]$ preserve cofibrations and acyclic cofibration. 
We know that $\sS_0$ is cofibrantly generated by \cite{RR22}, so we consider the following generating set of cofibrations and acyclic cofibration 
$$I = \{ i: X \hookrightarrow Y \mid  X, Y \text{ finite}\} ,$$
$$J= \{ j: X \xhookrightarrow{\sim} Y \mid  X, Y \text{ finite}\} .$$
By Theorem \ref{t3.0.4}, we know that
$G$-$\sS_0$ is cofibrantly generated with generating sets: 
$$I_G = \{ G/H \times i \}_{i \in I, H \leq G} = \left\{ \coprod_{G/H} i \right\}_{i \in I, H \leq G},$$
$$J_G = \{ G/H \times  j\}_{j \in J, H \leq G} = \left\{ \coprod_{G/H} j\right\}_{j \in J, H \leq G}.$$
On cofibrations and acyclic cofibrations the action of $G$ is given by permuting copies of the same map as the indexing is given by the orbits. 

Correspondingly, we know, by \cite{RR22}, that is $\sCC_\F^0$ cofibrantly generated. 
Consider generating set of cofibrations and acyclic cofibrations 
$$I' = \{ i': A \hookrightarrow B \mid  A, B \text{ finite  as $\F$-vector spaces} \}, $$
$$J' = \{ j': A \xhookrightarrow{\sim} B \mid  A, B \text{ finite  as $\F$-vector spaces}\}. $$
Again, by Theorem \ref{t3.0.4}, we know that 
$G$-$\sCC_\F^0$ is cofibrantly generated with generating sets: 
$$I'_G = \{ \F[G/H] \otimes i' \}_{i' \in I', H \leq G}, $$
$$J'_G = \{ \F[G/H] \otimes  j'\}_{j' \in J', H \leq G}. $$
Observe that $\F[G/H] \otimes \F[i]$ is a map in $I'_G$, because for $i: X \rightarrow Y \in I$ then $\F[i] \in I'$, since $\F[X]$ and $\F[Y]$ will be finite as $\F$-vector spaces. 
Moreover, $\F[-]$ preserves cofibrations and acyclic cofibrations. So $\F_G[G/H \times i]$ is a cofibration for any $i \in I$, since $\F[G/H] \otimes \F[i] = \F_G[G/H\times i].$
Since $\F_G[-]$ preserves generating cofibrations, we can conclude that $\F_G[-]$ preserves all cofibrations because they are directed colimits of generating cofibrations. 
An analogous argument holds for acyclic cofibrations. 

To conclude that $\F_{\Og^{op}}[-]$ preserve cofibrations and acyclic cofibrations, recall that the category of orbit diagrams
$\Og^{op}$-$\sS_0$ has generating cofibrations,
$$I_{\Og^{op}}:=\{ \Og^{op}(G / H, -) \otimes i\}_{i \in I, H \leq G},$$
and generating acyclic cofibrations
$$J_{\Og^{op}}:=\{\Og^{op}(G / H, -) \otimes j\}_{j \in J, H \leq G} .$$
Provided a subgroup $K \leq G$, these generating sets become: 
$$I_{\Og^{op}}(G/K):=\{ \Og^{op}(G / H, G/K) \otimes i\}_{i \in I, H,K \leq G} = \left\{ \bigoplus_{(G/K)^H} i \right\}_{i \in I, H, K \leq G},$$
and generating acyclic cofibrations
$$J_{\Og^{op}}(G/K):=\{\Og^{op}(G / H, G/K) \otimes j\}_{j \in J, H,K \leq G} = \left\{ \bigoplus_{(G/K)^H} j \right\}_{i \in I, H, K \leq G}.$$
Note, as before, the action of $G$ on acyclic cofibrations is given by permuting copies of the same map with indexing given by the $(G/K)^H$. 

Observe that for any $i \in I$, the map $\oplus_{(G/K)^H} \F[i]$ is a cofibration in $\Og^{op}$-$\sCC_\F^0$ with domain and codomain of $\F[i]$ being finite dimensional vector spaces. 
Since 
$$\oplus_{(G/K)^H} \F[i] = \F [\sqcup_{(G/K)^H}i] =
\F_{\Og^{op}} [\oplus_{(G/K)^H}i],$$
we have $\F_{\Og^{op}}[-]$ preserves generating cofibrations so this is enough to conclude that it preserves all cofibrations and again an analogous argument holds for acyclic cofibrations. 
\end{proof}

A natural question to ask is how far away is the adjunction $(\F_G[-], \mathcal{P}_G)$ from being a Quillen equivalence. 
In the non-equivariant case \cite{RR22}, we have that when the field $\F$ is algebraically closed the functor $\F[-]$ is homotopically full and faithful. 
Now, using the functors in the equivariant adjunction and prove a similar statement. 
\begin{proposition}\label{p4.0.2}
If $\F$ is an algebraically closed field, then $\F_{\Og^{op}}[-]$ is homotopically full and faithful given any of the three model structures. 
\end{proposition}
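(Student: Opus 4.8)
The plan is to reduce the equivariant statement to the non-equivariant one object-wise, using the projective model structure on the presheaf category. Recall that homotopical full faithfulness means that for every cofibrant object $\underline{X}$ in $(\Og^{op}\text{-}\sS_0, \text{proj}(\simeq_i))$, the derived unit map $\underline{X} \to \mathbb{R}\mathcal{P}_{\Og^{op}}(\F_{\Og^{op}}[\underline{X}]^{\text{fib}})$ is a weak equivalence in $\Og^{op}\text{-}\sS_0$; since weak equivalences and fibrations in the projective model structure are detected object-wise, it suffices to check this at each object $G/H$ of $\Og$.

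First I would observe that $\F_{\Og^{op}}[-]$ and $\mathcal{P}_{\Og^{op}}$ are computed object-wise: by construction $\F_{\Og^{op}}[\underline{X}](G/H) = \F[\underline{X}(G/H)]$ and $\mathcal{P}_{\Og^{op}}(\underline{A})(G/H) = \mathcal{P}(\underline{A}(G/H))$, so the unit of the presheaf-level adjunction at $G/H$ is just the unit $\underline{X}(G/H) \to \mathcal{P}(\F[\underline{X}(G/H)])$ of the non-equivariant adjunction $(\F[-], \mathcal{P})$. Next I would arrange to compute the derived unit object-wise. For this I need: (a) a cofibrant replacement functor in the presheaf model structure that is object-wise cofibrant — projective-cofibrant objects are in particular object-wise cofibrant, and since every object of $\sS_0$ is cofibrant in any of the three model structures of Theorem \ref{t2.1.3}, this is automatic; (b) a fibrant replacement of $\F_{\Og^{op}}[\underline{X}]$ that is object-wise a fibrant replacement in $\sCC_\F^0$ — this follows because fibrations and trivial cofibrations in $\text{proj}(\simeq_i')$ are detected/constructed object-wise, so an object-wise fibrant replacement is a projective fibrant replacement. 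Hence the derived unit of $(\F_{\Og^{op}}[-], \mathcal{P}_{\Og^{op}})$ evaluated at $G/H$ is identified with the derived unit of $(\F[-], \mathcal{P})$ applied to $\underline{X}(G/H)$.

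Then I would invoke Theorem \ref{t2.3.3}: since $\F$ is algebraically closed, the non-equivariant functor $\F[-]$ is homotopically full and faithful in all three settings, so for each $H$ the map $\underline{X}(G/H) \to \mathcal{P}((\F[\underline{X}(G/H)])^{\text{fib}})$ is a weak equivalence ($\simeq_i$) in $\sS_0$. Assembling over all $H$, the derived unit of $(\F_{\Og^{op}}[-], \mathcal{P}_{\Og^{op}})$ is an object-wise weak equivalence, hence a weak equivalence in $\text{proj}(\simeq_i)$. This proves $\F_{\Og^{op}}[-]$ is homotopically full and faithful.

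The main obstacle is point (b) above: making sure that the derived unit can actually be computed object-wise, i.e., that an object-wise fibrant replacement of $\F_{\Og^{op}}[\underline{X}]$ in the projective model structure is legitimate, and that the object-wise cofibrancy hypothesis needed by Theorem \ref{t2.3.3} (which there requires cofibrant objects of $(\sS_0, \simeq_i)$, all of them) is met. Both are in fact soft — in the projective model structure on diagrams over a cofibrantly generated model category, a map is a fibration (resp. trivial fibration) iff it is object-wise so, hence small-object-argument factorizations can be taken object-wise, and every object of $\sS_0$ is cofibrant — so the argument goes through cleanly, but this compatibility between derived functors and evaluation is the one place where a careful word is needed rather than a bare citation.
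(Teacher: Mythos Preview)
Your proposal is correct and follows essentially the same route as the paper: reduce the derived unit of $(\F_{\Og^{op}}[-],\mathcal{P}_{\Og^{op}})$ to its object-wise components and invoke Theorem~\ref{t2.3.3} at each $G/H$. If anything, you are more careful than the paper about justifying that the derived unit can be computed object-wise (your points (a) and (b)); the paper simply asserts that $\eta_{\Og^{op}}\mid_{G/H}$ coincides with the non-equivariant derived unit and then remarks afterward that every object of $\sS_0$ is cofibrant.
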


\begin{proof} 
We show that the derived unit of the adjunction $(\F_{\Og^{op}}[-], \mathcal{P}_{\Og})$, denoted $\eta_{\Og^{op}}$, is a weak equivalence. 
Consider a cofibrant object $\underline{X}$ in $\Og^{op}$-$\sS_0$ and now the goal is to show
$$\eta_{\Og^{op}}: \underline{X} \rightarrow \mathcal{P}_{\Og^{op}}(\F_{\Og^{op}} [\underline{X}]), $$
this is a weak equivalence in $\Og^{op}$-$\sS_0$, meaning its a weak equivalence object-wise for every cofibrant object. 
So given $G/H \in \Og$, we want to investigate whether the map below is a weak equivalence,
$$\eta_{\Og^{op}} \mid_{G/H} : \underline{X}(G/H) \rightarrow \mathcal{P}_{\Og^{op}}(\F_{\Og^{op}}[\underline{X}(G/H)]^f),$$
where $-^f$ denotes the cofibrant replacement.
Note that $\eta_{\Og^{op}} \mid_{G/H}$ coincidences with the derived unit of $(\F[-], \mathcal{P})$ which we know is a weak equivalence by Theorem \ref{t2.3.3}.
Hence, we conclude that $\F_{\Og^{op}}[-]$ is homotopically full and faithful, as desired. 
\end{proof}
Note that $\eta_{\Og^{op}}$ will be a weak equivalence for any $G/H \in \Og$ and for any $\underline{X}$, which is perhaps not cofibrant, since all objects in $\sS_0$ are fibrant/cofibrant. 
Given this we get the following corollary, which provides a relationship between $\Theta$ and $\mathcal{P}$ and between $\Phi$ and $\F[-]$.  

\begin{corollary}\label{c4.0.3}
At the level of the appropriate homotopy categories (i.e. after deriving functors with respect to notions \(\simeq_i\) and \(\simeq_i'\)) we have
\begin{enumerate}
    \item $\Phi \circ \mathcal{P}_G \cong \mathcal{P}_{\Og^{op}} \circ \Phi$ if and only if $\mathcal{P}_G \circ \Theta \cong \Theta \circ \mathcal{P}_{\Og^{op}}$.
    \item 
$\Theta \circ \F_{\Og^{op}} [-] \cong \F_G[-] \circ \Theta$ if and only if
$\F_{\Og^{op}} [-] \circ \Phi \cong \Phi \circ  \F_G[-] $.
\end{enumerate}
\end{corollary}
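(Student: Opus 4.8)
The plan is to prove Corollary \ref{c4.0.3} as a purely formal consequence of the fact, established in the discussion preceding it, that the commuting square (\ref{commsquare}) commutes \emph{on the nose} for the left adjoints, together with the fact that $\Theta$ and $\Phi$ are mutually inverse equivalences after localization. Concretely, the square (\ref{commsquare}) gives us the on-the-nose identity $\F_G[-] = \Theta \circ \F_{\Og^{op}}[-] \circ \Phi$, and by Theorems \ref{t3.1.3} and \ref{t3.2.3} the pairs $(\Theta,\Phi)$ on both the simplicial-set side and the coalgebra side are Quillen equivalences; since $\Theta(X) = X(G/e)$ has $\eta\colon X \to \Phi\Theta(X)$ an isomorphism on cofibrant objects (as used in the proof of Theorem \ref{t3.1.3}), the derived functors $\mathbb{L}\Theta$ and $\mathbb{L}\Phi$, equivalently $\mathbb{R}\Phi$ and $\mathbb{R}\Theta$, are inverse equivalences of homotopy categories with $\mathbb{R}\Phi \circ \mathbb{L}\Theta \cong \mathrm{id}$ and $\mathbb{L}\Theta \circ \mathbb{R}\Phi \cong \mathrm{id}$ (here using that $\Phi$ already preserves all weak equivalences and fibrations, so $\mathbb{R}\Phi = \Phi$, and $\Phi$ is simultaneously right adjoint to $\Theta$ in the stated Quillen equivalence).

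First I would fix notation at the level of homotopy categories: write $\Theta$, $\Phi$ for the derived functors (abusing notation, since $\Phi$ preserves weak equivalences it descends directly), so that $\Phi \circ \Theta \cong \mathrm{id}$ and $\Theta \circ \Phi \cong \mathrm{id}$. Then for part (2): starting from the on-the-nose square, after deriving we get $\Theta \circ \F_{\Og^{op}}[-] \circ \Phi \cong \F_G[-]$. Precomposing both sides with $\Theta$ and using $\Phi \circ \Theta \cong \mathrm{id}$ yields $\Theta \circ \F_{\Og^{op}}[-] \cong \F_G[-] \circ \Theta$, which is the first statement of (2); conversely, postcomposing $\Theta \circ \F_{\Og^{op}}[-] \cong \F_G[-] \circ \Theta$ with $\Phi$ and using $\Theta\circ\Phi\cong\mathrm{id}$ recovers $\F_{\Og^{op}}[-]\circ\Phi \cong \Phi\circ\F_G[-]$. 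The two statements in (2) are thus each derivable from the other by conjugating with the inverse equivalence $(\Theta,\Phi)$, so "if and only if" holds; in fact both hold unconditionally, but the corollary only claims their equivalence.

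For part (1), the same conjugation trick applies to the right adjoints $\mathcal{P}_G$ and $\mathcal{P}_{\Og^{op}}$. Given $\Phi \circ \mathcal{P}_G \cong \mathcal{P}_{\Og^{op}} \circ \Phi$ in the homotopy category, precompose with $\Theta$ and use $\Phi\circ\Theta\cong\mathrm{id}$ on the left side of the right-hand functor's source — more carefully, apply $\Phi\circ\mathcal{P}_G\cong\mathcal{P}_{\Og^{op}}\circ\Phi$ after precomposing with $\Theta$: the left side becomes $\Phi\circ\mathcal{P}_G\circ\Theta$ and the right side $\mathcal{P}_{\Og^{op}}\circ\Phi\circ\Theta \cong \mathcal{P}_{\Og^{op}}$, giving $\Phi\circ\mathcal{P}_G\circ\Theta \cong \mathcal{P}_{\Og^{op}}$; then postcompose with $\Theta$ on the left and use $\Theta\circ\Phi\cong\mathrm{id}$ to obtain $\mathcal{P}_G\circ\Theta\cong\Theta\circ\mathcal{P}_{\Og^{op}}$, as desired. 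The reverse implication is entirely symmetric. The main subtlety — really the only thing requiring care — is bookkeeping of \emph{which} derived functor is meant and on which side $\Phi$ is adjoint: in Theorem \ref{t3.1.3} the Quillen equivalence is written $\Theta \dashv \Phi$ while in Theorem \ref{t3.2.3} it is written $\Phi \dashv \Theta$ (the roles of $\Theta,\Phi$ are swapped between the two), so I would be careful that $\Phi\circ\Theta\cong\mathrm{id}$ and $\Theta\circ\Phi\cong\mathrm{id}$ are both valid in the relevant homotopy categories before conjugating, invoking that an equivalence of categories has a quasi-inverse on both sides. With that pinned down, each biconditional is a two-line diagram chase using only $\Theta\circ\Phi\cong\mathrm{id}$, $\Phi\circ\Theta\cong\mathrm{id}$, and the already-established on-the-nose commutation $\F_G[-]=\Theta\circ\F_{\Og^{op}}[-]\circ\Phi$.
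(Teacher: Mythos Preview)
Your proposal is correct and follows essentially the same approach as the paper: both arguments reduce the corollary to the general categorical fact that when the vertical pairs $(\Theta,\Phi)$ are equivalences of homotopy categories, one has $F'\circ\Phi\cong\Phi\circ F$ if and only if $F\circ\Theta\cong\Theta\circ F'$, which is exactly your conjugation argument spelled out. Your additional invocation of the on-the-nose identity $\F_G[-]=\Theta\circ\F_{\Og^{op}}[-]\circ\Phi$ is not needed for the biconditional (as you yourself note), and your flag about the apparent swap of $\Theta$ and $\Phi$ between Theorems~\ref{t3.1.3} and~\ref{t3.2.3} is a genuine typo in the paper but does not affect the argument.
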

\begin{proof}
This follows from the general fact that given a diagram of functors
$$\begin{tikzcd}
\sfC \arrow[rrr, "F", shift right=2] \arrow[dd, "\Phi",shift right=-2 ] &  &  &
\mathsf{D}  \arrow[dd, "\Phi",shift right=-2 ] \\
& & &   \\
\sfC' \arrow[uu, "\Theta", shift left] \arrow[rrr, "F'", shift left=2]                  &  &  & 
\mathsf{D}'. \arrow[uu, "\Theta", shift left]    
\end{tikzcd}$$
where the two pairs of vertical arrows define an equivalence of categories, we have $F' \circ \Phi \cong \Phi \circ F$ if and only if $F \circ \Theta \cong \Theta  \circ F'$. The corollary follows by applying this fact to Theorem \ref{t4.0.1}, which tells us that the horizontal pairs of functors in the square \ref{commsquare} are Quillen adjunctions. 
    \end{proof}
 
\begin{theorem}\label{t4.0.4}
    If $\F$ is an algebraically closed field, then the derived functor
    $$\F_{G}[-]: \operatorname{Ho}(G\text{-}\sS_0) \rightarrow \operatorname{Ho}(G\text{-}\sCC^0_\F),$$
    is full and faithful given any of the three model structures. 
\end{theorem}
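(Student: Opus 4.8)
The plan is to deduce Theorem~\ref{t4.0.4} formally from its non-equivariant incarnation, Proposition~\ref{p4.0.2} (hence ultimately from Theorem~\ref{t2.3.3} of~\cite{RR22}), by transporting it across the Elmendorf-type Quillen equivalences of Theorems~\ref{t3.1.3} and~\ref{t3.2.3}. The argument will be uniform in the three choices $\simeq_i$/$\simeq_i'$, so I would fix one of them throughout. Recall that, since $\F_G[-]$ is left Quillen by Theorem~\ref{t4.0.1}, the ``derived functor'' of the statement is its total left derived functor $\mathbf{L}\F_G[-]$, and that for a Quillen adjunction $(F,U)$ the functor $\mathbf{L}F$ is full and faithful exactly when the derived unit is a weak equivalence on all cofibrant objects — i.e. exactly when $F$ is homotopically full and faithful in the sense used in this paper.

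First I would record that the square~\eqref{commsquare} commutes strictly in the form
\[
\F_G[-]\circ \Theta \;=\; \Theta \circ \F_{\Og^{op}}[-]
\]
as functors $\Og^{op}\text{-}\sS_0 \to G\text{-}\sCC_\F^0$: both send a diagram $\underline{X}$ to the simplicial cocommutative coalgebra $\F[\underline{X}(G/e)]$ equipped with the $G$-action that defines $\Theta$. This is just the on-the-nose commutativity already noted after~\eqref{commsquare}. By Theorem~\ref{t4.0.1} the functors $\F_G[-]$ and $\F_{\Og^{op}}[-]$ are left Quillen, and by Theorems~\ref{t3.1.3} and~\ref{t3.2.3} the two instances of $\Theta$ (on reduced simplicial sets and on coalgebras) are left Quillen as well. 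Hence both sides of the displayed identity are composites of left Quillen functors, and passing to total left derived functors — using that a left Quillen functor preserves cofibrant objects and (by Ken Brown's lemma) weak equivalences between cofibrant objects, so that the left derived functor of a composite of left Quillen functors is the composite of the left derived functors — yields a natural isomorphism
\[
\mathbf{L}\F_G[-]\circ \mathbf{L}\Theta \;\cong\; \mathbf{L}\Theta \circ \mathbf{L}\F_{\Og^{op}}[-]
\]
of functors $\operatorname{Ho}(\Og^{op}\text{-}\sS_0)\to\operatorname{Ho}(G\text{-}\sCC_\F^0)$.

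Next I would use that $\Theta$ is the left adjoint of a Quillen equivalence in both Theorem~\ref{t3.1.3} and Theorem~\ref{t3.2.3}, so both functors $\mathbf{L}\Theta$ are equivalences of homotopy categories (with quasi-inverses $\mathbf{R}\Phi$). Precomposing the previous isomorphism with a quasi-inverse of $\mathbf{L}\Theta\colon \operatorname{Ho}(\Og^{op}\text{-}\sS_0)\to\operatorname{Ho}(G\text{-}\sS_0)$ gives
\[
\mathbf{L}\F_G[-] \;\cong\; \mathbf{L}\Theta \circ \mathbf{L}\F_{\Og^{op}}[-] \circ \big(\mathbf{L}\Theta\big)^{-1},
\]
which exhibits $\mathbf{L}\F_G[-]$ as the composite of two equivalences of categories with $\mathbf{L}\F_{\Og^{op}}[-]$. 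When $\F$ is algebraically closed, Proposition~\ref{p4.0.2} says $\F_{\Og^{op}}[-]$ is homotopically full and faithful, i.e.\ $\mathbf{L}\F_{\Og^{op}}[-]$ is full and faithful. Since a composite of a full and faithful functor with equivalences of categories is again full and faithful, $\mathbf{L}\F_G[-]$ is full and faithful, which is the assertion of the theorem.

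I do not expect a genuine obstacle here: the whole argument is formal once Proposition~\ref{p4.0.2} is available, and that proposition is where the real content (algebraically closed coefficients, the input from~\cite{RR22}) sits. The only delicate bookkeeping point is to keep the derivation on the ``left Quillen'' side: it matters that the strict identity above involves $\F_G[-]\circ\Theta = \Theta\circ\F_{\Og^{op}}[-]$ rather than the form $\F_G[-]=\Theta\circ\F_{\Og^{op}}[-]\circ\Phi$, so that the right Quillen functor $\Phi$ (which does not a priori preserve cofibrant objects) never needs to be derived. If one prefers to avoid $\mathbf{L}$-notation altogether, the same reasoning can be phrased directly at the level of derived units: conjugating the derived unit of $(\F_{\Og^{op}}[-],\mathcal{P}_{\Og^{op}})$ — a weak equivalence on cofibrant objects by Proposition~\ref{p4.0.2} — by the Quillen equivalences $\Theta,\Phi$ produces the derived unit of $(\F_G[-],\mathcal{P}_G)$, which is therefore a weak equivalence on cofibrant objects, i.e.\ $\F_G[-]$ is homotopically full and faithful.
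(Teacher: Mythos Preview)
Your proof is correct and follows essentially the same strategy as the paper: reduce to Proposition~\ref{p4.0.2} via the Elmendorf-type Quillen equivalences of Theorems~\ref{t3.1.3} and~\ref{t3.2.3}. The paper organizes the commutation of square~\eqref{commsquare} through Corollary~\ref{c4.0.3} (chaining isomorphisms involving $\Phi$ at the level of homotopy categories), whereas you work entirely on the left Quillen side with the strict identity $\F_G[-]\circ\Theta=\Theta\circ\F_{\Og^{op}}[-]$ and then derive; this is a slightly cleaner bookkeeping choice but not a different argument.
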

\begin{proof}
    The task at hand, is to show that derived unit of the adjunction $(\F_G[-], \mathcal{P}_G)$, denoted $\eta_G$, is a weak equivalence. 
    Consider $\Phi (S) $ for some $S\in G$-$\sS_0$ then we know by \ref{p4.0.2}
    \begin{align*} 
    \Phi (S) \cong & \mathcal{P}_{\Og^{op}} \circ \F_{\Og^{op}}[-] \circ \Phi (S) \\
    \Phi (S) \cong & \mathcal{P}_{\Og^{op}} \circ \Phi \circ \F_{G}[-] (S) \\
    \Phi (S) \cong & \Phi \circ \mathcal{P}_{G} \circ \F_{G}[-] (S) \\
    \Theta \circ \Phi (S) \cong & \Theta \circ \Phi \circ \mathcal{P}_{G} \circ \F_{G}[-] (S) \\
    S \cong & \mathcal{P}_{G} \circ \F_{G}[-] (S) = \mathcal{P}_{G} \circ \F_{G}[S] \\
    \end{align*}
    as desired. 
    Note the isomorphisms hold due to Corollary \ref{c4.0.3}.
\end{proof}

This theorem is item (4) in Theorem \ref{one} and illustrates the usefulness of Elmendorf's theorem and the perspective of modeling equivariant homotopy theory in terms of diagrams. 

The remaining portion of item (4) in Theorem \ref{one} is contained in the next section.

\section{Perfect Fields}
In the previous section, we assumed that $\F$ was algebraically closed field, to apply certain structural results about coalgebras over algebraically closed fields. We now work in the more general setting of perfect fields. 
We will set up the appropriate framework and cite the necessary results from Section 8 in \cite{RR22}.

Let $\F$ be a perfect field denote by $\overline{\F}$ the algebraic closure of $F$, and by $\G$ the absolute Galois group of $\F.$ We denote the category of $\G$-sets by $\s(\G)$ and the category of $\G$-simplicial sets by $\sS(\G)$.

By Theorem 8.3.2. in \cite{RR22}, there are three left proper combinatorial model category structures on  $\sS(\G)$ where the cofibrations are the monomorphisms and the weak equivalences are maps in $\sS(\G)$ such that the underlying map in $\sS$ is a categorical $\F$-equivalence (respectively, $\pi_1$-$\F$-equivalence, and $\F$-equivalence). 
In line with the previous notation of this text, we denote the categories with the respective model structure as $(\sS_0(\G),\simeq_i\!(\G))$.
This model category structure is sometimes referred to as the ``naive model category structure'' (note we are not taking fixed points in this context). 

Since, for each $i$, $(\sS(\G),\simeq_i\!(\G)) $ is a cofibrantly generated model category with generating cofibrations being monomorphism we obtain the following proposition whose proof uses the same reasoning as that of Proposition \ref{p3.1.1} and \ref{p3.2.1}.
\begin{proposition}\label{p5.0.1}
For discrete group $G$ and any of the three model structures on $\sS(\G)_0$, the fixed point model structure on $G$-$\sS(\G)_0$ exists. 
\end{proposition}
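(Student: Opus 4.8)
The plan is to verify the three cellularity conditions of Definition \ref{cellular} for the $H$-fixed point functors $(-)^H \colon G\text{-}\sS(\G)_0 \to \sS(\G)_0$ and then invoke Theorem \ref{t3.0.4}. This is precisely the same argument that establishes Propositions \ref{p3.1.1} and \ref{p3.2.1}, so the main point is to observe that nothing about the $\G$-action on the objects obstructs the verification: the $\G$-action and the $G$-action are on different ``slots'' and do not interact. The category $\sS(\G)_0$ is a presheaf category — namely functors $\mathbb{\Delta}^{op} \times \G \to \s$ landing in reduced simplicial sets — and for each $i$ it carries a cofibrantly generated (indeed left proper combinatorial) model structure whose generating cofibrations are monomorphisms, by Theorem 8.3.2 in \cite{RR22}. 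This is exactly the situation covered by Example 1.1 and Example 2.14 in \cite{Ste16}.

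First I would recall that for a presheaf category $\sfC$ with a cofibrantly generated model structure whose generating cofibrations are monomorphisms, the cellularity conditions are automatic: condition (2), preservation of filtered colimits by $(-)^H$, holds because limits over the finite (or more precisely, suitably small) diagram defining $H$-fixed points commute with filtered colimits in a presheaf category since colimits and finite limits are computed objectwise in $\s$; condition (3), the isomorphism $(G/H)^K \otimes X \xrightarrow{\cong} (G/H \otimes X)^K$, holds because $\otimes$ is a coproduct and $(-)^K$ applied to a coproduct $\coprod_{G/H} X$ with $G$ permuting the summands picks out the copies indexed by $(G/H)^K$ — this is a purely set-level computation that is insensitive to the extra $\G$-structure carried by $X$; and condition (1), preservation of pushouts along maps of the form $G/K \otimes f$ with $f$ a generating cofibration, follows because such pushouts in a presheaf category are computed objectwise, generating cofibrations are monomorphisms, and $(-)^H$ of such a pushout is again a pushout of monomorphisms by the same objectwise reasoning.

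The key step is simply to point out that $\sS(\G)_0$ is itself a category of the form $\mathsf{D}$-$\s$ (presheaves on a small category, restricted to the reduced objects, which remains a presheaf-like category closed under the relevant colimits and finite limits), so that Example 2.14 in \cite{Ste16} applies verbatim with $\sfC = \sS(\G)_0$. Having verified the cellularity conditions, Theorem \ref{t3.0.4} then produces the fixed point model structure on $G\text{-}\sS(\G)_0$ with generating (acyclic) cofibrations $\{G/H \otimes i\}$ and weak equivalences detected on $H$-fixed points for all $H \leq G$.

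I do not expect a genuine obstacle here, since the proof is a direct transcription of the earlier propositions; the only thing requiring a moment's care is checking that passing to the reduced subcategory $\sS(\G)_0$ (rather than all of $\sS(\G)$) does not disrupt the colimits and finite limits needed for the cellularity arguments — but reducedness is a condition on the degree-$0$ part that is preserved by the coproducts $\coprod_{G/H}(-)$ appearing in the tensor (a coproduct of reduced objects in the appropriate pointed sense remains reduced) and by the fixed-point limits, so this causes no difficulty. Hence the proof amounts to: cite Theorem 8.3.2 in \cite{RR22} for the hypotheses on $\sS(\G)_0$, cite Example 2.14 in \cite{Ste16} for the cellularity conditions, and apply Theorem \ref{t3.0.4}.
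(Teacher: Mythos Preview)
Your proposal is correct and follows essentially the same approach as the paper, which simply notes that the proof ``uses the same reasoning given for $G$-$\sS_0$ in Proposition \ref{p3.1.1} and \ref{p3.2.1}.'' You have spelled out the details behind that reasoning (cellularity via Example 2.14 in \cite{Ste16}, then Theorem \ref{t3.0.4}), and your citation of Theorem 8.3.2 in \cite{RR22} for the required hypotheses on $\sS(\G)_0$ matches the paper's setup exactly.
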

In an effort to streamline notation, lets denote this new model structure as 
$$(G\text{-}\sS(\G)_0, \text{\emph{fixpt}}(\simeq_i\!(\G))).$$
With this proposition, we get a version of Elmendorf's theorem for this setting:
\begin{theorem}\label{t5.0.2}
    There is a Quillen equivalence of model categories 
    $$ \Theta: ( \Og^{op} \text{-} \sS(\G)_0, \text{proj}(\simeq_i\!(\G)))  \rightleftarrows  (G\text{-} \sS(\G)_0,  \text{fixpt}(\simeq_i\!(\G))): \Phi,$$
    where $\Phi(S) = S(G/e)$ and $\Theta(T) = \underline{T}$ with $\underline{T}$ being the functor that takes $G/H$ to the reduced simplicial set $T^H$, which is also equipped with an action of $\G$.
\end{theorem}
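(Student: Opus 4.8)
The plan is to mimic verbatim the proof of Theorem \ref{t3.1.3}, since the only new ingredient is that every reduced simplicial set in sight now additionally carries a $\G$-action, and all the relevant model-categorical input has already been arranged: Proposition \ref{p5.0.1} gives the fixed point model structure on $G\text{-}\sS(\G)_0$, and the projective model structure on $\Og^{op}\text{-}\sS(\G)_0$ exists by the same appeal to Proposition A.2.8.2 in \cite{Lur09} (the model structures $(\sS(\G)_0,\simeq_i\!(\G))$ are left proper and combinatorial by Theorem 8.3.2 in \cite{RR22}), exactly as in Proposition \ref{p3.1.2}. First I would record that $(\Theta,\Phi)$ is an adjunction: $\Theta$ is evaluation at $G/e$ and $\Phi$ sends a $G$-object $T$ (in $\sS(\G)_0$) to the functor $\underline{T}\colon \Og^{op}\to \sS(\G)_0$, $G/H\mapsto T^H$, where the $G$-fixed points are taken in $\sS(\G)_0$, i.e. levelwise and $\G$-equivariantly, so that $T^H$ retains its $\G$-action. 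The adjunction unit and counit are the standard ones from Elmendorf's construction.

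Next I would check that $(\Theta,\Phi)$ is a Quillen adjunction. By construction of the fixed point model structure, each functor $(-)^H\colon G\text{-}\sS(\G)_0\to \sS(\G)_0$ preserves fibrations and weak equivalences; hence $\Phi$ preserves fibrations and (acyclic) fibrations, so $\Phi$ is right Quillen and $(\Theta,\Phi)$ is a Quillen adjunction. To upgrade to a Quillen equivalence I would invoke the general criterion: it suffices to show that for every cofibrant $\underline{X}$ in $\Og^{op}\text{-}\sS(\G)_0$ the unit $\eta\colon \underline{X}\to \Phi(\Theta(\underline{X}))=\underline{\underline{X}(G/e)}$ is a weak equivalence, and that $\Theta$ reflects weak equivalences between cofibrant objects — or, following the slicker argument in the proof of Theorem \ref{t3.1.3}, to show directly that for cofibrant $\underline{X}$ and fibrant $Y$ the adjoint transpose $\alpha\colon \Theta(\underline{X})\to Y$ is a weak equivalence iff $\beta\colon \underline{X}\to \Phi(Y)$ is. The key point is that $\eta$ is an \emph{isomorphism} on cofibrant objects; this is precisely the content of the proof of Theorem 2.10 in \cite{Ste16}, and that argument is purely formal in the cellularity conditions, which hold for $\sS(\G)_0$ for exactly the reasons cited in Proposition \ref{p5.0.1} (a presheaf category with monomorphisms as generating cofibrations, Example 2.14 in \cite{Ste16}). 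Granting this, for each subgroup $H$ one factors $\beta(G/H)\colon \underline{X}(G/H)\to Y^H$ as
\[
\underline{X}(G/H)\xrightarrow{\ \eta_{G/H}\ }\underline{X}(G/e)^H\xrightarrow{\ \alpha^H\ }Y^H ,
\]
with $\eta_{G/H}$ an isomorphism, so by the $2$-out-of-$3$ axiom $\beta$ is an objectwise weak equivalence (the weak equivalences in the projective structure) iff $\alpha^H$ is a weak equivalence for all $H$ iff $\alpha$ is a weak equivalence in the fixed point structure. This establishes the Quillen equivalence.

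The main thing to be careful about — and the only place where the $\G$-action is not a purely cosmetic decoration — is the verification that the cellularity conditions of Definition \ref{cellular} still hold for $\sS(\G)_0$, equivalently that Elmendorf's machinery applies on the nose. Here the subtlety is that the fixed point functors $(-)^H$ in play are fixed points for the $G$-action, computed in the category $\sS(\G)_0$, and one must confirm they commute with the relevant pushouts, filtered colimits, and the isomorphism $(G/H)^K\otimes X\xrightarrow{\ \cong\ }(G/H\otimes X)^K$; but since $\sS(\G)_0$ is again a (reduced) presheaf category over $\mathbb{\Delta}$ with values in $\G$-sets, and colimits and the tensor $S\otimes(-)=\coprod_S(-)$ are all computed levelwise, the same Example 2.14 in \cite{Ste16} applies with $\s$ replaced by $\s(\G)$, and the $\G$-action is simply carried along. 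I expect no genuine obstacle, only bookkeeping; the remark worth flagging in the statement — as the authors do — is that $\Theta(T)=\underline{T}$ lands in functors valued in $\sS(\G)_0$, i.e. $T^H$ is naturally a $\G$-simplicial set, which is exactly what is needed to make the eventual comparison with homotopy $\G$-fixed points in item (4) of Theorem \ref{one} meaningful.
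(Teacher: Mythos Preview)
Your proposal is correct and takes essentially the same approach as the paper, which simply states that the proof can be adapted from that of Theorem \ref{t3.1.3}. In fact you supply considerably more detail than the paper does, including the explicit check that the cellularity conditions persist for $\sS(\G)_0$.
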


\begin{proof}
  As with Theorem \ref{t3.2.3}, the proof of this result can be adapted from the proof of Theorem \ref{t3.1.3}.
\end{proof}
For any $\G$-set $S,$ the $\G$-fixed points of the $\overline{\F}$-coalgebra $\overline{\F}[S]$ form an $\F$-coalgebra. 
So, we can define the functor
$$\overline{\F}[-]^{\G}: \s(\G) \rightarrow \CoCo_{\F},$$
which admits a right adjoint induced by $\mathcal{P}$, defined as follows: 
$$\mathcal{P}_{\G}: \CoCo_{\F} \rightarrow \s(\G),$$
$$A \mapsto \CoCo_{\F}(\overline{\F}, A \otimes_{\F} \overline{\F}).$$
Additionally, for any $\G$-set $S$, there is a canonical isomorphism of $\F$-coalgebras 
$$\overline{\F}[S]^{\G} \otimes \overline{\F} \cong \overline{\F}[S].$$
Hence, we get that the unit of the adjunction ($\overline{\F}[-]^{\G}, \mathcal{P}_{\G})$,
$$S \mapsto \mathcal{P}_{\G}( \overline{\F}[S]^{\G}),$$
is a natural isomorphism. 
We now show an analogous result to Theorem \ref{t4.0.4}, which will make use of the adjunction 
$$\overline{\F}_G[-]^{\G}: G\text{-}\sS(\G)_0 \leftrightarrows G\text{-}\sCC^0_{\F} : \mathcal{P}_{\G, G}, $$
where $\overline{\F}_G[-]^{\G}$ is factored as follows:
$$\begin{tikzcd}
G\text{-}\sS(\G)_0 \arrow[rd, "{\overline{\F}_G[-]}"'] \arrow[rr, "{\overline{\F}_G[-]^\G}"] &   & G\text{-}\sCC_\F^0 \\   
& G\text{-}\sCC_\F^0\arrow[ru, "(-)^{\G}"'], &    
\end{tikzcd}$$
and $\mathcal{P}_{\G, G}$ is the induced functor on $G$-objects of $\sCC_{\F}^0$ and $\sS(\G)_0$. We have an induced functor 
$$\overline{\F}_{\Og^{op}}[-]^\G:  \Og^{op}\text{-}\sS(\G)_0 \rightarrow \Og^{op}\text{-}\sCC^0_{\F}, $$
at the level of presheaves over $\Og^{op}$ and and induced right adjoint $\mathcal{P}_{\G, \Og^{op}}$.

\begin{proposition}\label{p5.0.3}
    Given any of the three model structures and a perfect field, $\F$, then $\overline{\F}_{\Og^{op}}[-]^{\G}$ is homotopically full and faithful.
\end{proposition}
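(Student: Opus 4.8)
The plan is to reduce this statement to the non-equivariant case by working object-wise over the orbit category, exactly as in the proof of Proposition \ref{p4.0.2}. Concretely, I would show that the derived unit of the adjunction $(\overline{\F}_{\Og^{op}}[-]^{\G}, \mathcal{P}_{\G,\Og^{op}})$, call it $\eta_{\G,\Og^{op}}$, is a weak equivalence in $\Og^{op}\text{-}\sS(\G)_0$. Since weak equivalences and fibrations in the projective model structure on $\Og^{op}\text{-}\sS(\G)_0$ are detected object-wise, and since $\overline{\F}_{\Og^{op}}[-]^{\G}$ and $\mathcal{P}_{\G,\Og^{op}}$ are by construction defined by post-composing with the non-equivariant functors $\overline{\F}[-]^{\G}$ and $\mathcal{P}_{\G}$, the evaluation of $\eta_{\G,\Og^{op}}$ at any orbit $G/H$ should coincide with the derived unit of the non-equivariant adjunction $(\overline{\F}[-]^{\G}, \mathcal{P}_{\G})$ applied to $\underline{X}(G/H)$, possibly after a cofibrant replacement in $\sCC^0_{\F}$.

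The key steps, in order, are: (i) recall that $(\sS(\G)_0, \simeq_i(\G))$ and $(\sCC^0_{\F}, \simeq_i')$ are left proper combinatorial model categories (from Theorem 8.3.2 in \cite{RR22} and Theorem \ref{t2.2.2}), so that the projective model structures on the presheaf categories exist and their weak equivalences are object-wise; (ii) cite the relevant non-equivariant result from Section 8 of \cite{RR22} — the statement that, for a perfect field $\F$, the derived unit of $(\overline{\F}[-]^{\G}, \mathcal{P}_{\G})$ is a weak equivalence (this is the perfect-field analogue of Theorem \ref{t2.3.3}, and is what makes $\overline{\F}[-]^{\G}$ homotopically full and faithful); (iii) fix a cofibrant object $\underline{X}$ in $\Og^{op}\text{-}\sS(\G)_0$ and an orbit $G/H$, and identify $\eta_{\G,\Og^{op}}|_{G/H}$ with the non-equivariant derived unit on $\underline{X}(G/H)$; (iv) conclude object-wise, hence globally, that $\eta_{\G,\Og^{op}}$ is a weak equivalence, which is precisely the statement that $\overline{\F}_{\Og^{op}}[-]^{\G}$ is homotopically full and faithful.

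The main obstacle I anticipate is step (iii): making the identification of $\eta_{\G,\Og^{op}}|_{G/H}$ with the non-equivariant derived unit precise. The subtlety is bookkeeping about derived functors versus point-set functors on diagram categories — one must check that a cofibrant replacement of $\overline{\F}_{\Og^{op}}[\underline{X}]$ in $\Og^{op}\text{-}\sCC^0_{\F}$, when evaluated at $G/H$, is (weakly equivalent to) a cofibrant replacement of $\overline{\F}[\underline{X}(G/H)]^{\G}$ in $\sCC^0_{\F}$, and likewise that evaluation at $G/H$ commutes with the relevant derived functors up to the notions $\simeq_i$ and $\simeq_i'$. This is exactly the same point that arises in Proposition \ref{p4.0.2}, and it is handled there by noting that evaluation at an orbit is a right Quillen functor (it preserves fibrations and weak equivalences for the projective structure) whose left adjoint preserves cofibrations, so the identification goes through; I would mirror that argument here, the only new ingredient being that we now carry along the $\G$-action and invoke the perfect-field results of \cite{RR22} in place of the algebraically-closed ones.

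One should also remark, as in the paragraph following Proposition \ref{p4.0.2}, that because every object of $\sS(\G)_0$ is both fibrant and cofibrant, the object-wise derived unit is in fact a weak equivalence for arbitrary $\underline{X}$, not merely cofibrant ones; this observation is what will later let us feed Proposition \ref{p5.0.3} into a Corollary-\ref{c4.0.3}-style argument to obtain the perfect-field version of Theorem \ref{t4.0.4}, and ultimately the identification of the derived unit of $(\F_G[-], \mathcal{P}_G)$ with $X \mapsto \delta_G(X)^{h\mathbb{G}}$ asserted in item (4) of Theorem \ref{one}.
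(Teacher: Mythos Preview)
Your proposal is correct and follows essentially the same approach as the paper: both argue that the derived unit $\eta_{\G,\Og^{op}}$ is a weak equivalence by checking it object-wise at each $G/H$, identifying the component there with the non-equivariant derived unit of $(\overline{\F}[-]^{\G},\mathcal{P}_{\G})$, and then invoking the perfect-field result from Section~8 of \cite{RR22} (specifically, that this adjunction satisfies the criterion of Proposition~6.3.3 there, as shown in the proof of Theorem~8.3.4). If anything, your discussion of the bookkeeping in step~(iii) is more explicit than the paper's own treatment.
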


\begin{proof} 
We show the derived unit of the adjunction $(\overline{\F}_{\Og^{op}}[-]^{\G}, \mathcal{P}_{\G, \Og^{op}})$, denoted $\eta_{\G, \Og^{op}}$, is a weak equivalence. 
To see this we show that for any cofibrant object $\underline{X}$ in $\Og^{op}$-$\sS_0(\G)$, 
      $$\eta_{\G, \Og^{op}}: \underline{X} \rightarrow \mathcal{P}_{\G, \Og}(\overline{\F}_{\Og} [\underline{X}]^{\G}),$$
 is a weak equivalence in $\Og$-$\sS_0(\G)$, meaning it is a weak equivalence object-wise.
 Given $G/H \in \Og$, consider
   $$\eta_{\G, \Og^{op}} \mid_{G/H} : \underline{X}(G/H) \rightarrow \mathcal{P}_{\G, \Og^{op}}((\F_{\G, \Og^{op}}[\underline{X}(G/H)]^{\G})^f),$$
where $(-)^f$ again denotes the cofibrant replacement functor.
Note that $\eta_{\Og^{op}} \mid_{G/H}$ coincidences with the derived unit of $(\overline{\F}[-]^{\G}, \mathcal{P}_{\G})$, which is a weak equivalence. 
This is because in the proof of Theorem 8.3.4 in \cite{RR22} it was shown that $(\overline{\F}[-]^{\G}, \mathcal{P}_{\G})$ satisfies Proposition 6.3.3 in \cite{RR22}. \end{proof}
We now conclude the following result.
\begin{theorem}\label{6.2}
    Let $\F$ be a perfect field, $\overline{\F}$ its algebraic closure, and $\G$ the absolute Galois group.
    Then given any of the three model structures, the derived functor
    $$\overline{\F}_G[-]^{\G}: \operatorname{Ho}(G\text{-}\sS(\G)_0) \rightarrow \operatorname{Ho}(G\text{-}\sCC_{\F}^0),$$
    is full and faithful.
\end{theorem}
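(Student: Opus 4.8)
The plan is to run the argument of Theorem~\ref{t4.0.4} in the $\G$-equivariant setting, substituting $\overline{\F}_G[-]^\G$ for $\F_G[-]$, the orbit-diagram functor $\overline{\F}_{\Og^{op}}[-]^\G$ for $\F_{\Og^{op}}[-]$, the perfect-field adjunction $(\overline{\F}[-]^\G,\mathcal{P}_\G)$ of Theorem 8.3.4 in~\cite{RR22} for the non-equivariant $(\F[-],\mathcal{P})$ of Theorem~\ref{t2.3.3}, and Proposition~\ref{p5.0.3} for Proposition~\ref{p4.0.2}. The first step is to check that $\overline{\F}_G[-]^\G$ and $\overline{\F}_{\Og^{op}}[-]^\G$ are left Quillen, so that their left derived functors exist; this is the verbatim analogue of Theorem~\ref{t4.0.1}. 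Since $\overline{\F}[-]^\G\colon\sS(\G)_0\to\sCC_\F^0$ is left Quillen for each of the three model structures by Theorem 8.3.4 in~\cite{RR22}, it carries an (acyclic) monomorphism between finite simplicial sets to an (acyclic) injection of finite-dimensional $\F$-coalgebras, and it is compatible with the generating-cofibration formulas via the identification $\overline{\F}[(G/K)^H\otimes X]^\G\cong (G/K)^H\otimes\overline{\F}[X]^\G$; this last isomorphism is the one place where real care is needed, as it asserts that the $\G$-fixed-point functor on the coalgebra side commutes both with coproducts and with the $H$-fixed-point functors $(-)^H$, the latter because $(-)^H$ is a limit over a $G$-action that commutes with the $\G$-action.

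Second, I would establish the strict factorization $\overline{\F}_G[-]^\G=\Theta\circ\overline{\F}_{\Og^{op}}[-]^\G\circ\Phi$, exactly as for the square~\ref{commsquare}: for $S\in G$-$\sS(\G)_0$, $\Phi(S)$ is the diagram $G/H\mapsto S^H$, applying $\overline{\F}_{\Og^{op}}[-]^\G$ gives $G/H\mapsto\overline{\F}[S^H]^\G$, and $\Theta$ evaluates at $G/e$ to return $\overline{\F}[S]^\G$ with its residual $G$-action, namely $\overline{\F}_G[S]^\G$, whose $H$-fixed points are $\overline{\F}[S^H]^\G$ by the commutation of fixed points just noted. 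Since the horizontal pairs are then Quillen adjunctions (first step) and the vertical pairs $\Theta\dashv\Phi$ are the Quillen equivalences of Theorems~\ref{t3.2.3} and~\ref{t5.0.2}, the formal observation underlying Corollary~\ref{c4.0.3} applies and yields, on homotopy categories, $\Phi\circ\mathcal{P}_{\G,G}\cong\mathcal{P}_{\G,\Og^{op}}\circ\Phi$ and $\overline{\F}_{\Og^{op}}[-]^\G\circ\Phi\cong\Phi\circ\overline{\F}_G[-]^\G$.

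Third, I would reproduce the computation of Theorem~\ref{t4.0.4}. By Proposition~\ref{p5.0.3} the derived unit of $(\overline{\F}_{\Og^{op}}[-]^\G,\mathcal{P}_{\G,\Og^{op}})$ is a weak equivalence, so for any $S\in G$-$\sS(\G)_0$, in $\operatorname{Ho}(\Og^{op}\text{-}\sS(\G)_0)$,
\[\Phi(S)\cong\mathcal{P}_{\G,\Og^{op}}\big(\overline{\F}_{\Og^{op}}[\Phi(S)]^\G\big)\cong\mathcal{P}_{\G,\Og^{op}}\big(\Phi(\overline{\F}_G[S]^\G)\big)\cong\Phi\big(\mathcal{P}_{\G,G}(\overline{\F}_G[S]^\G)\big),\]
using Proposition~\ref{p5.0.3}, then the factorization of the second step, then the first identity displayed above. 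Applying the equivalence $\Theta$ together with $\Theta\Phi\cong\mathrm{id}$ gives $S\cong\mathcal{P}_{\G,G}(\overline{\F}_G[S]^\G)$ in $\operatorname{Ho}(G\text{-}\sS(\G)_0)$, and tracing the maps shows this isomorphism is the derived unit $\eta_G$ of $(\overline{\F}_G[-]^\G,\mathcal{P}_{\G,G})$; hence that derived functor is full and faithful.

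All the manipulation in the last two steps is formal once the relevant square commutes on the nose. I expect the main obstacle to be the bookkeeping in the first step: confirming that the $\G$-fixed-point functor on coalgebras commutes with the $H$-fixed-point functors and with evaluation of orbit diagrams, so that $\overline{\F}_G[-]^\G=\Theta\circ\overline{\F}_{\Og^{op}}[-]^\G\circ\Phi$ genuinely holds strictly, and checking --- as in the proof of Proposition~\ref{p4.0.2} --- that the cofibrant replacement used in Proposition~\ref{p5.0.3} restricts compatibly to each $G/H$, so that $\eta_{\G,\Og^{op}}\mid_{G/H}$ is literally the derived unit of $(\overline{\F}[-]^\G,\mathcal{P}_\G)$.
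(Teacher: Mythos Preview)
Your proposal is correct and follows essentially the same route as the paper: set up the commuting square with the Elmendorf equivalences of Theorems~\ref{t3.2.3} and~\ref{t5.0.2}, invoke Proposition~\ref{p5.0.3} for the orbit-diagram functor, transport the isomorphism through the $\Theta,\Phi$ equivalences via the analogue of Corollary~\ref{c4.0.3}, and conclude that the derived unit of $(\overline{\F}_G[-]^\G,\mathcal{P}_{\G,G})$ is an isomorphism. Your write-up is in fact more careful than the paper's in that you explicitly isolate the left-Quillen verification and the commutation-of-fixed-points bookkeeping as prerequisites; the paper leaves these implicit and simply asserts the relevant factorization and Corollary~\ref{c4.0.3}-style identities.
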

\begin{proof}
Consider the diagram below where the vertical functor are given by those in Theorems \ref{t5.0.2} and \ref{t3.2.3}:
$$\begin{tikzcd}
G\text{-}\sS(\G)_0 \arrow[rrr, "{\overline{\F}_G[-]^{\G}}", shift right=2] \arrow[dd, "\Phi"', shift right] &  &  &
G\text{-}\sCC_{\F}^0  \arrow[dd, "\Phi"', shift right] \\
& & &   \\
\Og^{op}\text{-}\sS(\G)_0 \arrow[uu, "\Theta"', shift right] \arrow[rrr, "{\overline{\F}_{\Og}[-]^{\G}}", shift left=2]                  &  &  & 
\Og^{op}\text{-}\sCC_{\F}^0.  \arrow[uu, "\Theta"', shift right]    
\end{tikzcd}$$

Using Theorem \ref{t4.0.4} and the analogous corollary to Corollary \ref{c4.0.3} in this setting, we show that unit of the adjunction $(\overline{\F}_G[-]^\G, \mathcal{P}_{\G,G})$, denoted $\eta_{\G, G}$, is a weak equivalence. 
Consider $\Phi (S) $, for some $S\in G$-$\sS_0(\G)$, then by \ref{p4.0.2}, at the level of homotopy categories we have
    \begin{align*} 
    \Phi (S) \cong & \mathcal{P}_{\G, \Og^{op}} \circ \overline{\F}_{\Og}[-]^{\G} \circ \Phi (S) \\
     \Phi (S) \cong & \mathcal{P}_{\G, \Og^{op}} \circ \Phi \circ \overline{\F}_{G}[-]^{\G} (S) \\
      \Phi (S) \cong & \Phi \circ \mathcal{P}_{\G, G} \circ \overline{\F}_{G}[-]^{\G} (S) \\
      \Theta \circ \Phi (S) \cong & \Theta \circ \Phi \circ \mathcal{P}_{\G, G} \circ \overline{\F}_{G}[-]^{\G} (S) \\
       S \cong & \mathcal{P}_{\G, G} \circ \overline{\F}_{G}[-]^{\G} (S) = \mathcal{P}_{\G, G} \circ \overline{\F}_{G}[S]^{\G}, \\
 \end{align*} as desired. 
 The isomorphisms hold by the proof of Corollary \ref{c4.0.3} when one takes $F$ to be $\overline{\F}_{G}[-]^{\G}$ or $\mathcal{P}_{\G, G}$ and $F'$ to be 
 $\overline{\F}_{\Og}[-]^{\G}$ or $\mathcal{P}_{\G, \Og}$, respectively. 
\end{proof}
The following corollary measures the failure of $\F_G[-]$ to be homotopically full and faithful. 
This forms part of item (4) in Theorem \ref{one}.
\begin{corollary} \label{homotopyfixedpoints}
Let $\F$ be a perfect field with algebraic closure $\F \subset \overline{\F}$ and  absolute Galois group $\G$.
The derived unit transformation of the Quillen adjunction 
$$\F_{G}[-]: G\text{-}\sS_0 \rightleftarrows G\text{-}\sCC_{\F}^0: P_G, $$
for any of the three model structures prescribed in this text, is canonically identified with the derived unit transformation of the Quillen adjunction
$$\delta_G: G\text{-}\sS_0 \rightleftarrows G\text{-}\sS(\G)_0: (-)^{\G},$$
where $\delta_G$ is the functor that induces the trivial action of the Galois group.   
\end{corollary}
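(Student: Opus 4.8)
The plan is to compare the two derived unit transformations by factoring both through the presheaf-category picture supplied by Elmendorf's theorem, exactly as in the proof of Theorem \ref{t4.0.4} and Theorem \ref{6.2}. First I would recall the non-equivariant input: in Section 8 of \cite{RR22} it is shown (this is the content behind Theorem \ref{t2.3.3} and Proposition \ref{p5.0.3}) that the composite $\F[-] = (-)^{\G} \circ \overline{\F}[-]$ factors the simplicial coalgebra of chains functor through $\sS(\G)_0$ via the trivial-action functor $\delta$, and that the derived unit of $(\F[-],\mathcal P)$ is canonically identified with the derived unit of $(\delta, (-)^{\G})$ followed by the (derived) unit of $(\overline{\F}[-]^{\G}, \mathcal P_{\G})$, the latter being an equivalence by Theorem 8.3.4 in \cite{RR22}. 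So at the non-equivariant level the statement is already known; the task is purely to transport this identification across the orbit-category diagram.

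The key steps, in order, would be: (i) observe that the factorization $\F_G[-] = (-)^{\G}_G \circ \overline{\F}_G[-]$ holds on $G$-objects because all the functors involved are defined object-wise on the underlying categories and commute with the $G$-action, so the adjunction $(\F_G[-],\mathcal P_G)$ is the composite of $(\delta_G, (-)^{\G})$ with $(\overline{\F}_G[-]^{\G}, \mathcal P_{\G,G})$; (ii) invoke Theorem \ref{6.2}, which says the second of these composable adjunctions has derived unit a weak equivalence (it is homotopically full and faithful), hence the derived unit of the composite $(\F_G[-],\mathcal P_G)$ is, up to the natural equivalence coming from the triangle identities, carried by the derived unit of $(\delta_G, (-)^{\G})$ alone; (iii) make the word ``canonically identified'' precise by writing out the standard natural isomorphism between the unit of a composite adjunction $R'R \dashv LL'$ and the first unit whiskered by $R'(\text{second unit})L$, then using that $\eta_{\G,G}$ from Theorem \ref{6.2} is an isomorphism in the homotopy category to collapse the whiskered factor. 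I would phrase this via the square in the proof of Theorem \ref{6.2} together with Theorem \ref{t5.0.2} and Theorem \ref{t3.2.3}: deriving everything and chasing the diagram as in that proof reduces the identification to the object-wise (i.e. presheaf-level) statement, which in turn reduces at each $G/H \in \Og$ to the non-equivariant identification from \cite{RR22} recalled above.

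The main obstacle I expect is bookkeeping rather than conceptual: one must be careful that the two Elmendorf equivalences $(\Theta,\Phi)$ for $\sS_0$, for $\sS(\G)_0$, and for $\sCC_{\F}^0$ are compatible with the trivial-action functor and the fixed-point functor $(-)^{\G}$ in the appropriate square, i.e.\ that $\delta_G$ and $(-)^{\G}$ themselves descend to the presheaf categories and commute (up to coherent isomorphism) with $\Theta$ and $\Phi$ — this is the analogue of Corollary \ref{c4.0.3} in the present setting, and it is what lets the diagram chase go through. Once that compatibility is recorded, the proof is a short diagram argument:

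\begin{proof}
By definition of $\F_G[-]$ in Section 4 and of $\overline{\F}_G[-]^{\G}$ in Section 5, the Quillen adjunction $(\F_G[-],\mathcal P_G)$ is the composite of the Quillen adjunctions
\[
\delta_G: G\text{-}\sS_0 \rightleftarrows G\text{-}\sS(\G)_0: (-)^{\G}
\qquad\text{and}\qquad
\overline{\F}_G[-]^{\G}: G\text{-}\sS(\G)_0 \rightleftarrows G\text{-}\sCC_{\F}^0: \mathcal P_{\G,G},
\]
since on underlying objects $\F_G[X] = \overline{\F}_G[\delta_G X]^{\G}$ and all functors respect the $G$-action. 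For a composite adjunction $L = L_1 L_2 \dashv R = R_2 R_1$, the unit is the whiskered composite $\eta = (R_2\,\eta_1\,L_2)\circ \eta_2$, where $\eta_1$, $\eta_2$ are the units of $(L_1,R_1)$ and $(L_2,R_2)$; passing to derived functors, the same formula holds in the homotopy categories. Here $(L_1,R_1) = (\overline{\F}_G[-]^{\G}, \mathcal P_{\G,G})$ and $(L_2,R_2) = (\delta_G,(-)^{\G})$, so the derived unit of $(\F_G[-],\mathcal P_G)$ is
\[
\eta_G \;\simeq\; \bigl((-)^{\G}\, \eta_{\G,G}\, \delta_G\bigr)\circ \eta^{\delta}_G,
\]
where $\eta^{\delta}_G$ is the derived unit of $(\delta_G,(-)^{\G})$ and $\eta_{\G,G}$ is the derived unit of $(\overline{\F}_G[-]^{\G}, \mathcal P_{\G,G})$. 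By Theorem \ref{6.2}, $\overline{\F}_G[-]^{\G}$ is homotopically full and faithful, i.e.\ $\eta_{\G,G}$ is a weak equivalence on all cofibrant objects; hence the whiskered factor $(-)^{\G}\,\eta_{\G,G}\,\delta_G$ is a natural isomorphism in $\operatorname{Ho}(G\text{-}\sS_0)$. Therefore $\eta_G$ is canonically identified with $\eta^{\delta}_G$, the derived unit of $(\delta_G,(-)^{\G})$, which is the asserted identification. The same argument applies verbatim for each of the three model structures, using the corresponding statements of Theorem \ref{t2.3.3}, Theorem \ref{t4.0.4}, Proposition \ref{p5.0.3}, and Theorem \ref{6.2}.
\end{proof}
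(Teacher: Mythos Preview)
Your argument is correct and is precisely the justification the paper leaves implicit: in the paper the corollary is stated immediately after Theorem \ref{6.2} with no proof, relying on the reader to observe the factorization $\F_G[-]\cong \overline{\F}_G[-]^{\G}\circ\delta_G$ and the resulting decomposition of derived units. Your write-up makes this explicit via the standard formula for the unit of a composite adjunction together with Theorem \ref{6.2}, which collapses the second factor; this is exactly the intended argument, and the Elmendorf/presheaf discussion in your preamble, while not strictly needed once Theorem \ref{6.2} is in hand, correctly mirrors how that theorem itself was proved.
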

This above statement says that the derived unit transformation is identified, for each $\simeq_i$ and corresponding $\simeq_i'$, with the canonical map into the homotopy $\G$-fixed points $X \rightarrow (\delta_G (X))^{h\G}$, where $(-)^{h\G}$ is interpreted in the appropriate way in each model category. 

\begin{remark} Finally, we describe the connection to the rational homotopy theory of simply connected spaces. We first analyze the derived unit transformation in Corollary \ref{homotopyfixedpoints} in the non-equivariant case. 
Let $\G$ be the absolute Galois group of $\mathbb{Q}$ and suppose $X$ is a simply connected reduced simplicial set. Denote by $\delta X$ the simplicial set $X$ considered as a $\G$-simplicial set with the trivial action. 
Then the homotopy fixed points functor \[X \mapsto (\delta X)^{h\G},\] interpreted in the model category structure on $\mathsf{sSet}(\G)_0$ that has $\mathbb{Q}$-equivalences as weak equivalences, is precisely the rational localization of $X$. 
This follows since $\mathbb{G}$ is a profinite group, the rational homology of the classifying space $B\G$ satisfies $H_i(B\G;\mathbb{Q})=0$ for $i>0$. In fact, if $X$ is a rational space, then the map
\[ X=\text{Maps}(*,X) \to \text{Maps}(B\G,X)= (\delta X)^{h\G},\]
is a $\mathbb{Q}$-equivalence since $B\G \to *$ is. 
This means that the derived unit of the non-equivariant version of Corollary \ref{homotopyfixedpoints} (in the context of $\mathbb{Q}$-equivalences) is the rational localization when applied to a simply connected simplicial set.

In the $G$-equivariant setting, using the interpretation of $G$-$\mathsf{sSet}(\G)_0$ in terms of the category of diagrams $\mathcal{O}^{op}_G$-$\mathsf{sSet}(\mathbb{G})_0$, we obtain that the $G$-equivariant rationalization of a nilpotent $G$-simplicial set $X$ may be functorially recovered from its simplicial coalgebra of chains $\mathbb{Q}_G[X]$. 

\end{remark}

\printbibliography

\end{document}